\newtheorem {theorem}    {Theorem}[section]
\newtheorem {lemma}      [theorem]    {Lemma}
\newtheorem {corollary}  [theorem]    {Corollary}
\newtheorem {proposition}[theorem]    {Proposition}
\theoremstyle{definition}
\newtheorem {definition} [theorem]    {Definition}
\newtheorem {example}    [theorem]    {Example}
\newtheorem {remark}    [theorem]    {Remark}
\newcounter{AbcT}
\numberwithin{equation}{section}
\newcommand{\IGNORE}[1]{}
\newcommand{\bigzero}{\mbox{\normalfont\large\bf 0}}
\DeclareMathOperator{\GL}{GL}
\newcommand{\frk}[2][n]{\mathfrak{#2}}
\newcommand{\defeq}{\stackrel{\text{def}}{=}}
\begin{document}
\title{Dirichlet improvability for $S$-numbers}

\begin{abstract}
We study the problem of improving Dirichlet's theorem  of metric Diophantine approximation in the most general and multiplicative form in $S$-adic setting. Our approach is based on translation of the problem related to Dirichlet improvability into a dynamical one, and the main technique of our proof is the $S$-adic version of quantitative nondivergence estimate due to D.Y. Kleinbock and G. Tomanov. The main result of this paper can be regarded as the number field version of earlier works of D.Y. Kleinbock and B. Weiss \cite{KW1}, and of the second named author and Anish Ghosh \cite{GG}. Also this in turn generalises a result of Shreyasi Datta and M. M. Radhika \cite{DR} on singularity of vectors to any number field $K$ and $S$ containing all archimedian places. 
\end{abstract}
\subjclass[2020]{11J83, 11K60, 37A17, 11J13} \keywords{
$S$-adic Diophantine approximation, Dirichlet's theorem, Dirichlet improvability}
\thanks{}

\author{Sourav Das}
\author{Arijit Ganguly}

\address{Department of Mathematics and Statistics, Indian Institute of Technology Kanpur, Kanpur, 208016, India}
\email{iamsouravdas1@gmail.com, arijit.ganguly1@gmail.com}
\maketitle

\section{Introduction}\label{section:intro}
In this paper we study the improvement of Dirichlet's theorem of Diophantine approximation in the $S$-adic setting. The branch of Number theory called `Diophantine approximation' deals with approximation of real numbers by rationals and its higher dimensional analogues, and the Dirichlet's theorem is one of its foundational results. It is indeed quite natural to ask how much improvement the  above mentioned theorem of Dirichlet undergoes for a given real number, vector, matrix etc. Recall that given $A \in M_{m \times n}(\mathbb R)$ and $ 0 < \varepsilon < 1,$ we say that Dirichlet's theorem can be $\varepsilon$-improved for $A$, and write $A \in DI_{\varepsilon}(m,n)$, if for any $t \gg 1$, there exists $\mathbf{q} \in \mathbb Z^n \setminus \{0\}$ and $\mathbf{p} \in \mathbb Z^m$ satisfying 
\[ \|A \mathbf{q} - \mathbf{p} \|\, < \, \varepsilon e^{-t/m}    \text{ \,\,and } \displaystyle 
\|\mathbf{q} \| < \varepsilon e^{t/n},\]  where $ \| \cdot \|$ denotes the sup norm on $ \mathbb R^n$.  Also we say that $A \in M_{m \times n}(\mathbb R)$ is $\textit{singular}$ if $A \in DI_{\varepsilon}(m,n)$ for any $\varepsilon >0$. In the pioneering works \cite{DS1,DS2}, H. Davenport and W. Schmidt showed that the irrational numbers that are Dirichlet improvable are precisely the ones which are badly approximable (\cite{DS1}). Furthermore, they have also established the first metrical result  in the complete generality  which says that,
$DI_{\varepsilon}(m,n)$ has zero Lebesgue measure for any $\varepsilon\in (0,1)$.\\

In 1964, V. Sprind\v{z}uk developed a new avenue, usually referred to as `Metric Diophantine approximation on manifolds', while resolving a long standing conjecture of K. Mahler that says,  
 almost every point on the curve $(x,x^2,\dots,x^n)$ is not `very well approximable' by rationals with respect to the induced one dimensional Lebesgue measure. The central problem of this subject is to investigate the extent the properties of a generic point in $\mathbb{R}^n$ with respect to Lebesgue measure or some nice measures are inherited by embedded submanifolds. Later on V. Sprind\v{z}uk himself conjectured a more general form of that of Mahler.  Several partial progress on this conjecture had been made, yet the original conjecture seemed to remain beyond one's reach. In 1998, D.Y. Kleinbock and G.A. Margulis proved the above mentioned conjecture of Sprind\v{z}uk in their landmark work \cite{KM} using technique from homogeneous dynamics. Undoubtedly it was a stroke of genius for them to observe a Dani type correspondence between \emph{very well approximability} of a vector and cusp excursion behaviour of the trajectory of certain unimodular lattice. At the core of their argument there is a sharp quantitative estimate, known as the `Quantitative nondivergence' estimate,  on nondivergence of trajectories under multidimensional diagonal flows that shows trajectories with such a nature are rare. Soon this Quantitative nondivergence estimate has been generalized, extended to many directions. To mention a few:
\begin{itemize}
 \item In \cite{KLW}, D.Y. Kleinbock, E. Lindenstrauss and B.
Weiss  generalize the results of \cite{KM} to more general class of measures, namely `friendly measures'.
\item D.Y. Kleinbock and G. Tomanov established the quantitative nondivergence estimate for flows on homogeneous spaces of products of real and $p$-adic Lie groups in \cite{KT}. Using this, they proved the $S$-adic analogue of \cite{KM}.
\end{itemize}
The reader is also suggested to see \cite{BKM,K,G-JNT} for other generalizations and extensions of that estimate which have number theoretic implications.  \\

The question of  Dirichlet improvability for points lying in manifolds has been first studied by Davenport and Schmidt, in fact they  showed in \cite{DS1} that, the set of all $x \in \mathbb R$ for which $(x,x^2) \in DI_{\varepsilon}(1,2)$ has zero Lebesgue measure for any $\varepsilon < 4^{-1/3}.$ Further developments in this direction were made by R.C. Baker \cite{Ba1,Ba2}, Y. Bugeaud \cite{Bu}, and M. Dodson, B. Rynne and J. Vickers \cite{DRV,DRV1}. In \cite{KW1} Kleinbock and  Weiss significantly generalize earlier works of Davenport, Schmidt, Baker and Bugeaud on Dirichlet improvability. One of the main results of \cite{KW1} is the existence of a constant $\varepsilon_0 >0$ such that for all $ \varepsilon < \varepsilon_0,$ $\mathbf{f}_* \nu(DI_{\varepsilon}(1,n))=0,$ for continuous, \emph{good} and \emph{nonplanar} maps $\mathbf{f}$ and Radon, \emph{Federer} measure $\nu.$ Note that the above mentioned $\varepsilon_0$ is quite far from $1$ and getting the same result as above for all $\varepsilon < 1$ is open and seems to be quite challenging. For analytic curves not contained in any affine hyperplane, this has been resolved by N. Shah in \cite{Shah1}. See also \cite{Shah2, Shah3} for some more results related to this. \\

Sooner or later,  there has been a drive to generalize classical results in Diophantine approximation to the setting of an arbitrary number field. The adelic successive minima theorem has been established independently by McFeat \cite{MCF}  and Bombieri and Vaaler \cite{BomVa}. See also \cite{B, KST},  and the references therein for  $S$-adic version of other foundational results of geometry of  numbers. There seem to exist multiple versions, although with very little variance, of Dirichlet's theorem over number fields in the literature (\cite{B,H,Q,Sch1,Sch2}). In the recent work \cite{AlG}, Mahbub Alam and Anish Ghosh obtained several quantitative improvements to Dirichlet's theorem in number fields generalizing an weighted spiralling of approximates.  The metric theory on manifolds in this context has germinated some time ago. The $S$-arithmetic version of Sprind\v{z}uk's conjecture has been proved by Kleinbock and Tomanov in \cite{KT}  modifying the technique of \cite{KM}. Subsequently using the results of \cite{KT}, A. Mohammadi and A. Salehi Golsefidy have settled the convergence case of $S$-arithmetic Khintchine-type theorem for $S$-adic non-degenerate analytic manifolds in \cite{MS1} and \cite{MS2}. An inhomogeneous theory on manifolds of the same,  both convergence and divergence case, is developed in the S-arithmetic setting in \cite{DG1} by Shreyasi Datta and Anish Ghosh. In a later work \cite{DG2}, the authors have also proved the $p$-adic analogue of inheritance of Diophantine exponents for affine subspaces, in a stronger form indeed. The badly approximable vectors have also been studied extensively in number fields in several works (see \cite{EGL, KLy, AGGLy}).\\

Since together with the euclidean and $S$-adic ones function field counterparts complete the general theory of Diophantine approximation over local fields, it is worth mentioning some of 
developments that took place of late in the setting of function fields. The reader should note that there are many interesting parallels, i.e., many results of usual euclidean Diophantine approximation also hold over function fields, however there are certain striking exceptions as well. For example, there is no analogue of Roth's theorem for function fields over a finite field. The geometry of numbers in the context of function fields was developed by K. Mahler (\cite{M}) in $1940$, and using results analogous to that of the euclidean case, one can obtain Dirichlet type theorem in positive characteristic (see \cite{GG} for an elementary proof of the most general form of Dirichlet's theorem in this setting). V. Sprind\v{z}uk established the positive characteristic analogue of Mahler's conjecture in \cite{Spr1}. We refer the reader to \cite{deM,L1} for general surveys and to \cite{AGP,G-JNT,GG1,GR,KN,Kr1,Kr2,L2} for some of the recent developments. Sprind\v{z}uk's conjecture over a local field of positive characteristic was settled by Anish Ghosh in \cite{G-pos}. The improvability of  Dirichlet's theorem has been first studied in this context by Arijit Ganguly and Anish Ghosh \cite{GG, GG1}. Indeed, the main result of \cite{GG} can be regarded as the function field analogue of that of one of the main results  of \cite{KW1}. On the other hand, much to one's surprise, it has been shown in \cite[Theorem 2.4]{GG1} that the Laurent series that are Dirichlet improvable are precisely the rational functions, which is completely opposite to the euclidean scenario. \\ 

It is quite expected to be inquisitive about Dirichlet improvability over number fields. Notwithstanding that in this setting there have been multiple versions of Dirichlet's theorem in the literature as mentioned earlier,  the problem of improvability of the same does not seem to have been pursued much to the best of author's knowledge; although a more restrictive notion, namely \emph{singular vectors}, for totally real number fields $K$ and $S$ being the collection of all archimedian places, has been coined and discussed at length in the very recent paper \cite{DR} by Shreyasi Datta and M. M. Radhika. We take up this as the theme of this project.
The main result of this paper, namely Theorem \ref{thm:1.4}, similar to those of \cite[Theorem 1.5]{KW1} and \cite[Theorem 3.7]{GG}, deals with the Dirichlet improvability of a generic point, with respect to some nice class of measures, lying in the image of a good and nonplanar map in the $S$-adic setting. Thus Theorem \ref{thm:1.4} of this paper can be regarded as the number field version of \cite[Theorem 1.5]{KW1} and \cite[Theorem 3.7]{GG}. \\

Further to the above, the notion singularity of vectors is now legitimately extended to any arbitrary number field $K$ and $S$ containing all archimedian places, and that, too, in any direction (see Definition \ref{defn:sing1}). This is indeed equivalent to that of \cite{DR} (see Lemma \ref{lemma:equiv}), when $K$ is totally real and $S$ consists of all archimedian places. From whence our Theorem \ref{thm:1.4} can be regarded as a generalization of \cite[Theorem 2.3]{DR}.\\


We first establish a multiplicative version (see Theorem \ref{thm:1.1}) of  Dirichlet's theorem for $S$-adic matrices. Although it can be proved in the exactly similar manner to that of \cite[Lemma 5.1]{B}, it is clearly the most general version of Dirichlet's theorem over number fields. In order to provide  a brief  overview of our result to the reader, here we first state a simplified version of the above mentioned Theorem \ref{thm:1.1} as follows. To begin with, we suppose $K$ is a number field of degree $d$ over $\mathbb Q.$ The completion of $K$ with respect to the place $v$ is denoted by $K_v$. Let $S$ be a finite collection of pairwise nonequivalent valuations of $K$ containing all archimedean places. By $K_S$ we denote the direct product of all $K_v$ for $v \in S$. Let $\mathcal O_S = \{x \in K : |x|_v \leq 1 \,\, \forall \,\, v\notin S \}$ denotes the $\textit{ring of S-integers}$ of $K,$ where $|\cdot|_v$ denotes the normalized absolute value in $K_v.$ We define the field constant 
    \begin{equation}
        \text{const}_K=\left(\frac{2}{\pi}\right)^s|D_K|^{1/2},
    \end{equation}
  where $s$ is the number of complex places of $K$ and $D_K$ is the discriminant of $K.$
  
\begin{theorem}
  For each $v \in S,$ let $\mathbf{y}^{(v)}=(y_1^{(v)},\dots,y_n^{(v)}) \in K_v^n.$ Also let $\varepsilon_v,$ $\delta_v$ $\in K_v \setminus \{0\}$ be given for each $v \in S$ with $|\varepsilon_v|_v <1$ and $|\delta_v|_v \geq 1$ for each $v \in S$ and
  $$\prod_{v \in S} |\varepsilon_v|_v |\delta_v|_v^n=(\text{const}_K)^{n+1}.$$
  Then there exist $ \vec{\mathbf{q}}=(q_1,\dots,q_n) \in \mathcal O_S^n \setminus \{0\} $ and $p \in \mathcal O_S$ satisfying
  \[| q_1 y_1^{(v)}+q_2 y_2^{(v)}+ \dots +q_n y_n^{(v)} - p|_v \, \leq \, |\varepsilon_v|_v \text{ \,\,and } \displaystyle 
\max_{1\leq j \leq n} |q_j|_v \leq |\delta_v|_v \,,\]
for all $v \in S.$
\end{theorem}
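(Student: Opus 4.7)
My plan is to recast the system as the condition that a nonzero $\mathcal{O}_S$-lattice vector lie in a suitable symmetric body in $K_S^{n+1}$, and then invoke the adelic Minkowski convex body theorem (equivalently, the adelic first successive minimum estimate) of McFeat \cite{MCF} and Bombieri--Vaaler \cite{BomVa}. The first step is to linearise the system. For each $v \in S$ introduce the unimodular matrix
\[
A_v \;=\; \begin{pmatrix} I_n & 0 \\ \mathbf{y}^{(v)} & -1 \end{pmatrix} \in \GL_{n+1}(K_v),
\]
whose action sends the column $(q_1,\dots,q_n,p)^{\mathrm{tr}}$ to $(q_1,\dots,q_n,\,\sum_j q_j y_j^{(v)} - p)^{\mathrm{tr}}$. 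The required inequalities are then exactly the condition that $A_v(\vec{\mathbf{q}},p)^{\mathrm{tr}}$ lie in the box
\[
E_v \;=\; \{(x_1,\dots,x_{n+1}) \in K_v^{n+1} : |x_j|_v \le |\delta_v|_v \text{ for } 1 \le j \le n, \; |x_{n+1}|_v \le |\varepsilon_v|_v\},
\]
so the theorem is reduced to producing a nonzero $\mathcal{O}_S$-point in $A^{-1}(E)$, where $A = (A_v)_{v \in S}$ and $E = \prod_{v \in S} E_v$.

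The second step is a volume count. A direct local computation yields $\vol(E_v)$ at a real (resp.\ complex, resp.\ finite) place equal to $2^{n+1}|\delta_v|_v^n|\varepsilon_v|_v$ (resp.\ $\pi^{n+1}|\delta_v|_v^n|\varepsilon_v|_v$, resp.\ $|\delta_v|_v^n|\varepsilon_v|_v$) under the standard Haar normalisations calibrated so that $\mathcal{O}_S$ has covolume $|D_K|^{1/2}$ in $K_S$. Multiplying these over $v \in S$ and substituting the hypothesis $\prod_{v \in S}|\varepsilon_v|_v |\delta_v|_v^n = (\text{const}_K)^{n+1} = \bigl((2/\pi)^s |D_K|^{1/2}\bigr)^{n+1}$ gives, after the $\pi^{s(n+1)}$ and $(2/\pi)^{s(n+1)}$ factors cancel, exactly $\vol(E) = 2^{d(n+1)} |D_K|^{(n+1)/2}$, where $d=[K:\Q]$. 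This is the Minkowski threshold for the lattice $A(\mathcal{O}_S^{n+1})$, whose covolume agrees with that of $\mathcal{O}_S^{n+1}$ because every $A_v$ is unimodular. The adelic Minkowski theorem applied to the closed, symmetric, component-wise convex body $E$ then supplies a nonzero $(\vec{\mathbf{q}},p) \in \mathcal{O}_S^{n+1}$ with $A_v(\vec{\mathbf{q}},p)^{\mathrm{tr}} \in E_v$ for each $v \in S$.

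Finally, I would rule out the degenerate case $\vec{\mathbf{q}} = 0$. Such a solution would force $p \in \mathcal{O}_S \setminus \{0\}$ with $|p|_v \le |\varepsilon_v|_v < 1$ at every $v \in S$ and $|p|_v \le 1$ at every $v \notin S$ (since $p$ is an $S$-integer), so $\prod_v |p|_v < 1$, contradicting the product formula. Therefore $\vec{\mathbf{q}} \in \mathcal{O}_S^n \setminus \{0\}$ and $(\vec{\mathbf{q}},p)$ is the required solution.

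The main obstacle I foresee is purely bookkeeping: synchronising the local Haar normalisations at the real, complex, and non-Archimedean places with the chosen covolume convention so that $(\text{const}_K)^{n+1}$ \emph{exactly} saturates the Minkowski volume bound; this saturation is precisely what lets the conclusion be formulated with non-strict inequalities, just as in the classical Minkowski statement. No new geometric or dynamical ingredient is required, and the argument runs in complete parallel to \cite[Lemma 5.1]{B}.
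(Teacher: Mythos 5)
Your proposal is correct and follows essentially the same route as the paper's proof (which itself follows Burger's Lemma 5.1): linearise the forms by a block matrix, apply the adelic/$S$-adic Minkowski--successive minima theorem of McFeat and Bombieri--Vaaler with the hypothesis saturating the volume bound, and exclude $\vec{\mathbf{q}}=0$ via the product formula. The only cosmetic difference is that you keep the lattice fixed and scale the box by the $\varepsilon_v,\delta_v$, whereas the paper absorbs $\varepsilon_v^{-1},\delta_v^{-1}$ into a diagonal matrix and uses the unit box, which is an equivalent bookkeeping choice.
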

\noindent For proof, we refer to  \cite[Lemma 5.1]{B}. Now the notion of Dirichlet improvability can be defined as follows:
 \begin{definition}\label{defn:dimp0}
 Given a vector $\mathbf{y}=\left(\mathbf{y}^{(v)}\right)_v \in K_{S}^{n},$ where  $\mathbf{y}^{(v)}=(y_1^{(v)},\dots,y_n^{(v)}) \in K_v^n$ for each $v\in S$ and $0<\varepsilon<1.$ We say that Dirichlet's theorem can be $\varepsilon$-improved for $\mathbf{y}$ or use the notation $\mathbf{y} \in DI_{\varepsilon}(1,n,K_S)$ if given any $\varepsilon_v,$ $\delta_v$ $\in K_v \setminus \{0\}$  for each $v \in S$ with $|\varepsilon_v|_v <1$ and $|\delta_v|_v \geq M$ for each $v \in S,$ where $M \geq 1$ is sufficiently large and
  $$\prod_{v \in S} |\varepsilon_v|_v |\delta_v|_v^n=(\text{const}_K)^{n+1},$$
  there exist $ \vec{\mathbf{q}}=(q_1,\dots,q_n) \in \mathcal O_S^n \setminus \{0\} $ and $p \in \mathcal O_S$ satisfying
  \[| q_1 y_1^{(v)}+q_2 y_2^{(v)}+ \dots +q_n y_n^{(v)} - p|_v \, \leq \, \varepsilon |\varepsilon_v|_v \text{ \,\,and } \displaystyle 
\max_{1\leq j \leq n} |q_j|_v \leq \varepsilon |\delta_v|_v \,,\]
for all $v \in S.$
 \end{definition}

In the next couple of sections we will prove multiplicative version of Dirichlet's theorem over number fields (Theorem \ref{thm:1.1})  and define a more general version of Dirichlet improvability (Definition \ref{defn:DI imp}). Now we state a special case of our  Theorem \ref{thm:1.4}.
\begin{theorem}
Let $X= \prod_{v \in S} K_{v}^{l_v},$ where $l_v \in \mathbb N $  and $U=\prod_{v \in S} U_v \subseteq X$ be open, and let $ \mathbf{f}=\left(\mathbf{f}^{(v)}\right)_v:U \rightarrow K_S^n$  be a map, where   $\mathbf{f}^{(v)}=\left(f_1^{(v)},\dots,f_n^{(v)}\right)$ are continuous maps from $U_v$ to $K_v^n$ such that $f_1^{(v)},\dots,f_n^{(v)}$ are polynomials and $1,f_1^{(v)},\dots,f_n^{(v)}$ are linearly independent over $K_v$ for each $v\in S.$ Then there exists  $\varepsilon_0 >0$ such that for all $\varepsilon < \varepsilon_0,$ $\mathbf{f}(\mathbf{x})$ is not Dirichlet $\varepsilon$-improvable for $\lambda$ almost every $\mathbf{x} \in U,$ where $\lambda =\prod_{v \in S} \lambda_v$ such that $\lambda_v$ is a Haar measure on $K_v^{l_v}.$ 
\end{theorem}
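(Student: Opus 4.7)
The plan is to adapt the dynamical approach of Kleinbock--Weiss \cite{KW1} and Ganguly--Ghosh \cite{GG} to the $S$-adic setting, using the $S$-adic quantitative nondivergence estimate of Kleinbock and Tomanov \cite{KT} as the principal technical tool. The argument splits into three stages: a Dani-type correspondence converting Dirichlet improvability into orbit avoidance of a compact set, a uniform measure estimate via quantitative nondivergence, and a bootstrap to the almost-everywhere statement.

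For the correspondence, let $G:=\prod_{v\in S}\SL_{n+1}(K_v)$ and $\Gamma:=\SL_{n+1}(\cO_S)$, so that $G/\Gamma$ is naturally identified with the space of unimodular $\cO_S$-lattices in $K_S^{n+1}$. For $\mathbf{y}=(\mathbf{y}^{(v)})_v\in K_S^n$ and any admissible parameter family $(\varepsilon_v,\delta_v)_{v\in S}$ in the sense of Definition~\ref{defn:dimp0}, set
\[
u(\mathbf{y})=\left(\begin{pmatrix}1 & \mathbf{y}^{(v)}\\ 0 & I_n\end{pmatrix}\right)_{v\in S},\qquad g_{\varepsilon,\delta}=\left(\diag\bigl(\varepsilon_v^{-1},\delta_v^{-1},\dots,\delta_v^{-1}\bigr)\right)_{v\in S};
\]
the product constraint $\prod_v|\varepsilon_v|_v|\delta_v|_v^n=\mathrm{const}_K^{n+1}$ places $g_{\varepsilon,\delta}$ in the unimodular subgroup. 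A direct $S$-adic Minkowski-type computation then shows that $\mathbf{y}\in DI_\varepsilon(1,n,K_S)$ if and only if, for every admissible $g_{\varepsilon,\delta}$, the lattice $g_{\varepsilon,\delta}\cdot u(\mathbf{y})\cdot\cO_S^{n+1}$ contains a nonzero vector of $S$-sup norm at most $\varepsilon$. Hence failure of Dirichlet $\varepsilon$-improvability of $\mathbf{y}$ becomes: there exists some admissible $g_{\varepsilon,\delta}$ with $\min_v|\delta_v|_v$ arbitrarily large for which $g_{\varepsilon,\delta}\cdot u(\mathbf{y})\cdot\cO_S^{n+1}$ lies in the compact set $\Omega_\varepsilon:=\{\Lambda:\lambda_1(\Lambda)>\varepsilon\}$.

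Fix a ball $B\subset U$ on which $\mathbf{f}$ is bounded, and invoke the Kleinbock--Tomanov nondivergence theorem applied to the family $\mathbf{x}\mapsto g_{\varepsilon,\delta}\cdot u(\mathbf{f}(\mathbf{x}))\cdot\cO_S^{n+1}$ to obtain
\[
\lambda\bigl(\{\mathbf{x}\in B:\lambda_1(g_{\varepsilon,\delta}\cdot u(\mathbf{f}(\mathbf{x}))\cdot\cO_S^{n+1})\leq\varepsilon\}\bigr)\leq C\varepsilon^\alpha\lambda(B),
\]
with $C,\alpha>0$ independent of $g_{\varepsilon,\delta}$, once two hypotheses are checked uniformly: (i) for each $j\in\{1,\dots,n\}$ and each pure wedge $\mathbf{w}\in\bigwedge^j(K_S^{n+1})$ arising from a primitive $\cO_S$-submodule of $\cO_S^{n+1}$, every coordinate of $\mathbf{x}\mapsto g_{\varepsilon,\delta}\cdot u(\mathbf{f}(\mathbf{x}))\mathbf{w}$ is $(C_0,\alpha_0)$-good on a dilate of $B$; and (ii) a uniform nondegeneracy lower bound $\sup_{\mathbf{x}\in B}\|g_{\varepsilon,\delta}\cdot u(\mathbf{f}(\mathbf{x}))\mathbf{w}\|\geq\rho\,\|g_{\varepsilon,\delta}\mathbf{w}\|$ holds with a constant $\rho=\rho(B,\mathbf{f})>0$. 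Property~(i) is inherited from the standard fact that polynomials over a local field $K_v$ are $(C_0,\alpha_0)$-good with constants depending only on the degree, applied entry-wise to $g_{\varepsilon,\delta}\cdot u(\mathbf{f}(\mathbf{x}))\mathbf{w}$ whose entries at place $v$ are polynomials in $\mathbf{x}^{(v)}$ of degree bounded in terms of $\deg\mathbf{f}^{(v)}$; the diagonal factor $g_{\varepsilon,\delta}$ merely rescales coordinates without affecting the good constants. Property~(ii) depends on the hypothesis that $1,f_1^{(v)},\dots,f_n^{(v)}$ are linearly independent over $K_v$: an induction on the rank $j$ using the unipotent action of $u(\mathbf{y})$ on $\bigwedge^j(K_S^{n+1})$ reduces the estimate to the $j=1$ case, which is an immediate consequence of the nonplanarity hypothesis applied place by place.

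With these hypotheses in hand, the uniform measure estimate allows one to set $\varepsilon_0:=C^{-1/\alpha}$ and apply the bound along a countable sequence of admissible parameters $g_k$ with $\min_v|\delta_{v,k}|_v\to\infty$; a Borel--Cantelli-type bootstrap in the spirit of \cite[Section 5]{KW1} and its adaptation in \cite{GG} then yields $\lambda(\{\mathbf{x}\in B:\mathbf{f}(\mathbf{x})\in DI_\varepsilon(1,n,K_S)\})=0$ for every $\varepsilon<\varepsilon_0$, and covering $U$ by countably many such balls completes the argument. The main technical obstacle is securing the uniform lower bound $\rho$ in hypothesis~(ii) independently of $g_{\varepsilon,\delta}$: the diagonal element scales coordinates with different weights at different places $v\in S$, so propagating the (purely algebraic) nonplanarity of $\mathbf{f}^{(v)}$ through the exterior algebra and across all places requires an $S$-adic refinement of the induction of \cite{KT}, combining place-wise lower bounds through the product structure of the $S$-norm.
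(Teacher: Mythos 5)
Your overall strategy is the same as the paper's (a Dani-type correspondence into the space of $\cO_S$-lattices, the Kleinbock--Tomanov quantitative nondivergence estimate, then a density argument), but there is a genuine gap at exactly the point you flag as ``the main technical obstacle'': the uniform verification of the second nondivergence hypothesis. As you state it, hypothesis (ii) reads $\sup_{\mathbf{x}\in B}\|g_{\varepsilon,\delta}\,u(\mathbf{f}(\mathbf{x}))\mathbf{w}\|\geq\rho\,\|g_{\varepsilon,\delta}\mathbf{w}\|$, but this is not the condition the nondivergence theorem (Theorem \ref{thm:qn}, condition (C2)) requires, and it is too weak: for a wedge $\mathbf{w}$ supported on indices not containing $\mathbf{e}_0$ (e.g.\ $\mathbf{w}=\mathbf{e}_1$ when $n=1$), $\|g_{\varepsilon,\delta}\mathbf{w}\|$ is of size $\prod_v|\delta_v|_v^{-1}$, which tends to $0$ as the parameters grow, so a bound relative to $\|g_{\varepsilon,\delta}\mathbf{w}\|$ would force the admissible range of $\varepsilon$ in the nondivergence estimate to shrink with the parameter and would destroy the uniformity of $\varepsilon_0$. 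What is actually needed, and what the paper proves, is an \emph{absolute} lower bound $\|\mathrm{cov}(h(\cdot)\Delta)\|_{\mu,B}\geq\rho$ with $\rho$ independent of $(\varepsilon_v,\delta_v)$. The mechanism is not an induction on the rank $j$ reducing to $j=1$: one computes the action on $\bigwedge^j$ explicitly (as in (\ref{equ:5.3})) and observes that the coefficient of each $\mathbf{e}_I$ with $0\in I$ is $\pm(\varepsilon_v^{(1)})^{-1}\prod_{i\in I\setminus\{0\}}(\delta_v^{(i)})^{-1}$ times a $K_v$-linear combination of $1,f_1^{(v)},\dots,f_n^{(v)}$ whose coefficients are the integral coordinates $w_I$; nonplanarity plus compactness of the unit sphere gives a place-wise bound $\geq\rho_v\max_I|w_I|_v$, the inequalities $|\varepsilon_v|_v<1$, $|\delta_v^{(i)}|_v\geq 1$ absorb the scaling, and crucially the covolume normalization (\ref{equ:1.3}) makes $\prod_{v\in S}|\varepsilon_v^{(1)}|_v^{-1}\prod_i|\delta_v^{(i)}|_v^{-1}$ equal to the constant $(\text{const}_K)^{-(n+1)}$, while the product formula handles $\max_I c(w_I)\geq 1$ for $w_I\in\cO_S$. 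This is the heart of the proof (leading to (\ref{equ:rho-til})), and your proposal only gestures at it without supplying the argument; as written, the step would fail.

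Two smaller points. First, the concluding ``Borel--Cantelli-type bootstrap'' is a mislabel and would not literally work, since the measures of the bad sets do not decay along the parameter sequence; the correct mechanism (used in the paper via Lemma \ref{lemma:den}) is that a uniform bound $\tilde C\varepsilon^{\alpha}\mu(B)$ with $\tilde C\varepsilon^{\alpha}<1$, valid on every ball, prevents any point from being a density point of the pullback of the bad set for a single fixed admissible parameter of large norm, which already forces measure zero. Second, $\varepsilon_0$ cannot simply be taken as $C^{-1/\alpha}$: the nondivergence theorem also requires $\varepsilon\leq\rho/\sqrt{D_K}$, so $\varepsilon_0$ must be chosen below both thresholds, as in the choice following Proposition \ref{main prop}. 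Your treatment of hypothesis (i) (polynomials over $K_v$ are $(C,\alpha)$-good with constants depending only on degree, diagonal scaling harmless, sup and products across places handled by Lemma \ref{lem:C,alpha} and Lemma \ref{lem:pr}) is correct and matches the paper.
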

Note that one can obtain an explicit estimate of the constant $\varepsilon_0$ appearing in the above theorem; indeed it is not difficult to compute that in terms of known quantities for $n=2,3$ and nice maps etc. So in that sense our proof can be regarded as `effective'. However optimality for that value of $\varepsilon_0$, computed as above, cannot be expected; although at the same time the set up of  Theorem \ref{thm:1.4} is very much general. Our proof is based on the $S$-arithmetic version of the quantitative nondivergence estimate (see \S6.6 \cite{KT-pre}).

\subsection*{Acknowledgements} The authors sincerely thank Anish Ghosh, School of Mathematics, Tata Institute of Fundamental Research, Mumbai, for all his constant supports. We are indebted to the Department of Mathematics and Statistics, Indian Institute of Technology Kanpur, for providing the research friendly atmosphere. We also want to express our gratitude to Shreyasi Datta for her valuable comments.  
 
\section{Dirichlet's theorem over number fields}\label{section:Dirichlet type }

In this section, we first state  Dirichlet's  theorem over number fields, which is a slight modification of  \cite[Lemma 5.1]{B}. Before stating Dirichlet's theorem, we recall some notations and terminologies from \cite{B}. 

 We take $K$ to be an algebraic number field of degree $d$ over $\mathbb Q.$ The collection of all nontrivial places of $K$ is denoted by $P_K$. For any $v \in P_K$, $K_v$ will denote the completion of $K$ with respect to the place $v$. We know that for any $v \in P_K$, $K_v$ is isomorphic to a finite extension of either $\mathbb R$ or $\mathbb Q_p$ for some prime $p$. Now for any given $v \in P_K$, if $v $ is an archimedean place, we say that $v$ lies over infinity, denoted by $v|\infty$. If $v$ is a nonarchimedean place then $v$ is the extension of some $p$-adic valuation. In this case we say $v$ lies over finite prime $p$, denoted by $v|p$. 
 \begin{itemize}
     \item $d_v=[K_v : \mathbb Q_v]$ denotes the local degree of $K_v$ for each place $v \in P_K$.
     \item $\|\cdot\|_v$ denotes the absolute value on $K_v$ for each place $v \in P_K$.
 \end{itemize}
 
For each place $v\in P_K$, we normalize the absolute value $\|\cdot\|_v$ as follows:
\begin{enumerate}
    \item if $v|p$ then $\|p\|_v=p^{-1},$
    \item if $v|\infty$ then for $x\in K_v,$ $\|x\|_v=|x|$, where $|\cdot|$ is the Euclidean absolute value on $\mathbb R$ or $\mathbb C$.
\end{enumerate}
\noindent So $\|\cdot\|_v$ extends the usual $p$-adic absolute value if $v|p$ and the Euclidean absolute value if $v|\infty$. Our second normalized absolute value $|\cdot|_v$ on $K_v$ is defined by
$$|x|_v=\|x\|_{v}^{d_v}.$$
Due to this normalization, we have the product formula:
\begin{equation}\label{equ:1.1}
 \prod_{v \in P_K} |x|_v =1   
\end{equation}
for all $x \in K,$ $x\neq 0.$

\begin{itemize}
    \item For $\vec{x}=(x_1,x_2,\dots,x_n) \in K_{v}^{n}$, we extend our absolute values as follows:
    $$|\vec{x}|_v=\max_{1\leq i \leq n} \{|x_i|_v\},$$
    $$\|\vec{x}\|_v=\max_{1\leq i \leq n} \{\|x_i\|_v\}.$$
    \item Let $v$ be a finite place of $K.$ We denote the maximal compact (open) subring of $K_v$ by $\mathcal O_v,$ that is,
    $$\mathcal O_v = \{x \in K_v : |x|_v \leq 1 \}.$$
    \item A subset $R_v$ in $K_v^n$ is a $K_v$-lattice if it is a compact open $\mathcal O_v$-module in $K_v^n.$ It is easy to observe that, $\mathcal O_v^n$ is a $K_v$-lattice in $K_v^n.$
    \item Let $S$ be a finite collection of places of $K$ containing all archimedean places. We define the ring of $S$-integers as 
    $$\mathcal O_S =\{x \in K : x \in \mathcal O_v \,\, \text{for} \,\, \text{all} \,\, v \notin S\}.$$
    \item The adele ring of $K$ is denoted by $K_{\mathbb A}$. $K$ is diagonally embedded inside $K_{\mathbb A}$, so that we may view $K \subseteq K_{\mathbb A}$ by the natural diagonal map $\rho : K \rightarrow K_{\mathbb A}$ defined by 
    $$\rho(\alpha)=(\alpha,\alpha,\alpha,\dots )$$
    for $\alpha \in K.$
    \item We say that a nonempty subset $R_v \subseteq K_v^n$ a regular set if it has one of the following forms:
    \begin{enumerate}
        \item If $v|\infty$ then $R_v$ is a bounded, convex, closed, symmetric subset of $K_v^n$ with nonzero volume.
        \item If $v \nmid \infty$ then $R_v$ is a $K_v$-lattice in $K_v^n$.
        
    \end{enumerate}
    
    \item We say that a subset $\mathcal R $ of $K_{\mathbb A}^n$ is \textit{admissible} if $\mathcal{R}= \prod_{v \in P_K} R_v,$ where $R_v$ is a regular set in $K_v^n$ for each $v \in P_K$ and $R_v=\mathcal{O}_v^n$ for almost all places $v.$
    
    \item We define the field constant for the number field $K$ as:
    \begin{equation}\label{equ:1.2}
        \text{const}_K=\left(\frac{2}{\pi}\right)^s|D_K|^{1/2},
    \end{equation}
    \noindent where  $D_K$ is the discriminant of $K$ and $s$ is the number of complex places of $K.$
    
\end{itemize}

\noindent We are now ready to state and prove a version of Dirichlet's theorem over number fields. The following theorem is a slight modification of  \cite[Lemma 5.1]{B}.

\begin{theorem}\label{thm:1.1} Let $A_v$ be an $m \times n$ matrix over $K_v$ for each $v \in S.$ Also let $$\varepsilon_{v}^{(1)},\dots,\varepsilon_{v}^{(m)},\delta_{v}^{(1)},\dots,\delta_{v}^{(n)} \in K_v \setminus \{0\}$$ be given for each $v \in S$ so that $| \varepsilon_{v}^{(i)} |_v < 1, \,\, \forall \,\, i=1, \dots,m$ and $|\delta_{v}^{(j)}|_v \geq 1, \,\, \forall  \,\, j=1,\dots,n$ for each $v \in S$ and 
\begin{equation}\label{equ:1.3}
    \prod_{v \in S}| \varepsilon_{v}^{(1)} |_v \dots | \varepsilon_{v}^{(m)} |_v |\delta_{v}^{(1)}|_v \dots |\delta_{v}^{(n)}|_v=(\text{const}_K)^{m+n}.
\end{equation}
Then there exists $\vec{x}=(x_1,\dots,x_n) \in \mathcal O_S^n \setminus \{\vec{0}\}$ and $\vec{y}=(y_1,\dots,y_m) \in \mathcal O_S^m$ satisfying
\begin{equation}\label{eqn:1.4}\left \{ \begin{array}{rcl} \|A_{v}^{(i)} \vec{x} - y_i \|_v  \leq \|\varepsilon_{v}^{(i)}\|_v &\mbox{for} &i=1,2,...,m \\ \|x_j\|_v \leq \|\delta_{v}^{(j)}\|_v &\mbox{for} &j=1,2,...,n\,, \end{array} \right.  
\end{equation}
for all $v \in S,$ where $A_{v}^{(i)}$ is the ith row of $A_v$ for $i=1,2,\dots , m.$
\end{theorem}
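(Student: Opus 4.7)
The natural approach is to recast the statement as an application of an adelic Minkowski-type theorem, in the spirit of McFeat and Bombieri--Vaaler. At each $v \in S$ I would introduce the regular set
\[
R_v = \bigl\{ (\vec{y},\vec{x}) \in K_v^{m}\times K_v^{n} : \|y_i - A_v^{(i)} \vec{x}\|_v \leq \|\varepsilon_v^{(i)}\|_v \text{ for all } i, \ \|x_j\|_v \leq \|\delta_v^{(j)}\|_v \text{ for all } j \bigr\}.
\]
For Archimedean $v$, the affine map $(\vec{y},\vec{x})\mapsto (\vec{y}-A_v\vec{x},\vec{x})$ has determinant one, so $R_v$ is the image of a box under a volume-preserving linear change of coordinates and is therefore bounded, convex, symmetric, with nonzero Haar volume; for non-Archimedean $v\in S$ the same change of coordinates shows $R_v$ is an $\mathcal{O}_v$-submodule of $K_v^{m+n}$ that is compact and open, hence a $K_v$-lattice. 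At the places $v\notin S$ I would simply take $R_v=\mathcal{O}_v^{m+n}$, so that $\mathcal{R}=\prod_{v\in P_K} R_v$ is admissible in the sense recalled above.

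Next I would check that the adelic content of $\mathcal{R}$ meets the hypothesis of the adelic Minkowski theorem. Because the linear change of coordinates above is unimodular at every $v\in S$, the volume of $R_v$ with respect to the appropriately normalized Haar measure on $K_v^{m+n}$ equals (up to the universal constants $2^{d_v(m+n)}$ at real places and $(2\pi)^{d_v(m+n)/2}$ at complex places that are absorbed into $\mathrm{const}_K$) the product $\prod_i\|\varepsilon_v^{(i)}\|_v^{d_v}\prod_j\|\delta_v^{(j)}\|_v^{d_v}=\prod_i|\varepsilon_v^{(i)}|_v\prod_j|\delta_v^{(j)}|_v$. Multiplying over $v\in S$ and using hypothesis \eqref{equ:1.3}, together with $\vol(\mathcal{O}_v^{m+n})=1$ at the remaining places, the total content equals $(\mathrm{const}_K)^{m+n}$ times the normalizing factor; this is precisely the threshold above which McFeat's/Bombieri--Vaaler's adelic Minkowski theorem guarantees a nonzero element of $K^{m+n}\cap\mathcal{R}$, viewed through the diagonal embedding $\rho$.

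Let $(\vec{y},\vec{x})\in K^{m+n}$ be such a nonzero element. The conditions at $v\notin S$ force $x_j,y_i\in\mathcal{O}_v$ for every such $v$, hence $\vec{x}\in\mathcal{O}_S^n$ and $\vec{y}\in\mathcal{O}_S^m$; the conditions at $v\in S$ give precisely the inequalities \eqref{eqn:1.4}. It remains only to verify that $\vec{x}\neq \vec{0}$. If $\vec{x}$ vanished, then at each $v\in S$ we would have $\|y_i\|_v\leq\|\varepsilon_v^{(i)}\|_v<1$, and at each $v\notin S$ we already have $\|y_i\|_v\leq 1$; but since $|\varepsilon_v^{(i)}|_v<1$ for every $v\in S$ and some $y_i$ is nonzero, this would contradict the product formula \eqref{equ:1.1}. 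Therefore $\vec{x}\in\mathcal{O}_S^n\setminus\{\vec{0}\}$, completing the proof.

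\textbf{Anticipated obstacle.} The proof is essentially a bookkeeping exercise once the set-up is in place; the only real subtlety is making the volume calculation match the stated normalization \eqref{equ:1.2} of $\mathrm{const}_K$ on the nose so that the equality in \eqref{equ:1.3} is exactly what the adelic Minkowski theorem demands. In Burger's formulation the verification is done at the level of Lemma~5.1, and I would expect the modification needed here to be mild---essentially replacing a strict inequality by an equality and tracking the factors $2^s,\,\pi^s,\,|D_K|^{1/2}$ through the product over $v\in S$.
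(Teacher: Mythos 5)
Your proposal is correct and follows essentially the same route as the paper: both define the admissible adelic region cut out by the inequalities (the paper via the block matrices $B_v$ with $R_v=\{\vec z:\|B_v\vec z\|_v\le 1\}$ and $R_v=\mathcal O_v^{m+n}$ off $S$), verify that the normalization \eqref{equ:1.3} makes the volume exactly the threshold for the adelic Minkowski/successive-minima theorem of McFeat and Bombieri--Vaaler (Burger's Theorem 2.1, giving $\lambda_1\le 1$), and conclude $\vec x\neq\vec 0$ from the product formula exactly as you do.
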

\begin{proof}
The proof of this theorem goes along the lines of the original proof by Burger with slight modification. For each place $v \in P_K,$ we define a $(m+n)\times (m+n)$ matrix $B_v$ over $K_v$ by:
 \[
B_v=\begin{pmatrix}
  \begin{matrix}
   {\delta_v}^{-1}I_n
  \end{matrix}
   & \bigzero \\

  \varepsilon_v^{-1}A_v &
  \begin{matrix}
  {\varepsilon_v}^{-1}I_m
  \end{matrix}
\end{pmatrix},
\]
for all $v \in S,$ where $\varepsilon_v^{-1}A_v$ is the $m \times n$ matrix over $K_v,$ whose $i$th row is ${\varepsilon_v^{(i)}}^{-1}A_v^{(i)}$ for $i=1, \dots,m;$  ${\delta_v}^{-1}I_n=\text{diag}\left({\delta_v^{(1)}}^{-1}, {\delta_v^{(2)}}^{-1}, \dots, {\delta_v^{(n)}}^{-1}\right)$ and  $ \varepsilon_v^{-1}I_m=\text{diag}\left({\varepsilon_v^{(1)}}^{-1}, {\varepsilon_v^{(2)}}^{-1},  \dots, {\varepsilon_v^{(m)}}^{-1}\right).$   
Also define $B_v=I_{m+n}$ for all $v \notin S.$

Define $R_v \subseteq K_{v}^{m+n}$ by 
$$R_v=\{\vec{z} \in K_{v}^{m+n} : \|B_v \vec{z}\|_v \leq 1 \}$$
and let $\mathcal R=\prod_{v} R_v.$ Now it is easy to see that $\mathcal R \subseteq K_{\mathbb A}^{m+n}$ and $\mathcal R $ is admissible. Observe that 

\begin{align*}
  \displaystyle  V(\mathcal R)& =2^{d(m+n)}\left(\frac{\pi}{2}\right)^{s(m+n)}|D_K|^{-(m+n)/2} \left(\prod_{v}|\text{det}B_v|_v\right)^{-1}  \\ & \displaystyle
     =2^{d(m+n)} \left(\left(\frac{\pi}{2}\right)^{s}|D_K|^{-1/2}\right)^{m+n} \left(\prod_{v} | \varepsilon_{v}^{(1)} |_v \dots | \varepsilon_{v}^{(m)} |_v |\delta_{v}^{(1)}|_v \dots |\delta_{v}^{(n)}|_v\right) \\ & \displaystyle =2^{d(m+n)}.
\end{align*}

Let $\lambda_1, \lambda_2, \dots, \lambda_{(m+n)}$   be the successive minima of $\mathcal R \subseteq K_{\mathbb A}^{m+n}$ with respect to $K^{m+n}.$ Then by the adelic successive minima theorem \cite[Theorem 3]{BomVa}, we know that
$$(\lambda_1 \lambda_2 \dots \lambda_{m+n})^d V(\mathcal R) \leq 2^{d(m+n)}.$$
Thus we have $\lambda_1 \leq 1,$ since $\lambda_n$'s are increasing and $V(\mathcal R )=2^{d(m+n)}.$ Hence there exists a point $\begin{pmatrix} \vec{x} \\ - \vec{y}
         \end{pmatrix}$ $\in K^{m+n}\setminus \{0\} \cap \mathcal R.$ By our definition of $R_v$ for $v \notin S,$ we have 
         $$\begin{pmatrix} \vec{x} \\ - \vec{y}
         \end{pmatrix} \in \mathcal{O}_{S}^{m+n}.$$
Also for $v \in S ,$ we have 
$$\|A_{v}^{(i)} \vec{x} - y_i \|_v  \leq \|\varepsilon_{v}^{(i)}\|_v$$
and $$\|x_j\|_v \leq \|\delta_{v}^{(j)}\|_v$$
for $i=1,2, \dots,m$ and $j=1,2,\dots,n.$ And it is easy to see that $\vec{x} \neq \vec{0}$ due to the product formula (\ref{equ:1.1}).
\end{proof}

\section{Dirichlet improvability and the main theorem}
\indent Before introducing the notion of ``Dirichlet improvability'', we first recall few notations from \cite{KT-pre}.

\begin{itemize}
    \item Throughout this article $K$ will represent a number field of degree $d$.
    \item $S$ will denote  a finite set of places of $K$ containing all the archimedean places. Precisely, let $S=\{v_1,\dots,v_{\ell}\},$ where $v_i \in P_K$ for $i=1,\dots,\ell,$ such that $S$ contains all archimedean places.
    \item We will denote the normalized absolute value on $K_v$ by $|\cdot|_v,$ which is same as earlier defined normalization $|\cdot|_v$ on $K_v.$ 
    \item $K_S$ is the direct product of all the completions $K_v,$ $v \in S,$ that is
    $$K_S=\prod_{v \in S}K_v.$$
    $K$ is imbedded inside $K_S$ via the diagonal imbedding. The set of all archimedean places of $K$ is denoted by $S_a \subseteq S$ and we denote $S \setminus S_a $ by $S_f.$ Let $S_c$ and $S_r$ respectively denote the set of all complex places and the set of all real places of $K.$
    
    \item The elements of $K_S$ will be denoted as $x=(x^{(v)})_{v \in S},$ or simply $x=(x^{(v)})_v,$ where $x^{(v)} \in K_v.$ For $x \in K_S$ we define the content $c(x)$ of $x$ as follows:
    $$c(x)=\prod_{v \in S}|x^{(v)}|_v .$$
    
    \item As usual the ring of integers of $K$ is denoted by $\mathcal O_K.$
    \item $\mathcal O_S$, i.e., the ring of $S$-integers of $K$, is also diagonally embedded inside $K_S.$ We will use this identification conveniently.
    \item  We will denote the elements of $K_{S}^{m}$ by bold alphabets as $\mathbf{x}=(\mathbf{x}^{(v)})_v \in K_{S}^{m},$ where $\mathbf{x}^{(v)}=(x_{1}^{(v)},\dots,x_{m}^{(v)}) \in K_{v}^{m}.$
     
    \item We denote by $\|\cdot\|_{v,2}$ the square of the usual Hermitian norm on $K_v^m$ if $v$ is complex, the usual Euclidean norm on $K_v^m$ if $v$ is real, and the sup norm defined by
    $$\|(x_1^{(v)}, \dots,x_m^{(v)})\|_{v,2} = \max_{i}|x_i^{(v)}|_v$$
    if $v$ is nonarchimedean. 
    
    \noindent Note that if $v$ is nonarchimedean and $\vec{x}=(x_1,\dots,x_m) \in K_v^m,$ then $\|\vec{x}\|_{v,2}=|\vec{x}|_v.$ 
    
    \item We will endow both $K_S$ and $K_S^m$  with the product metric on them respectively. More  precisely for any $x=\left(x^{(v)}\right)_v \in K_S$ and $\mathbf{x}=\left( \mathbf{x}^{(v)}\right)_v \in K_S^m$ the norms  are given by:
    
    $$\|x\|:=\sup_{v\in S} |x^{(v)}|_v$$
    and
    $$\| \mathbf{x} \|:= \sup_{v\in S} \|\mathbf{x}^{(v)}\|_{v,2}$$
    respectively.
    
    \item For $\mathbf{x}=(\mathbf{x}^{(v)})_v \in K_{S}^{m}$ we define the content $c(\mathbf{x})$ of $\mathbf{x}$ by:
    $$c(\mathbf{x})=\prod_{v\in S} \|\mathbf{x}^{(v)}\|_{v,2}.$$
    \item For the sake of convenience, we will keep the  notations $\|\cdot\|_{v,2}$ and $c(\cdot)$ to denote the norms and the content on the exterior powers $\bigwedge^r (K_v^m)$ and $\bigwedge^r (K_S^m),$ respectively. 
    
    \item Given a field $F$, the set of all $m \times n$ matrices over $F$ is denoted by $M_{m\times n}(F).$
    \item We define $M_{m\times n}(K_S)$ as:
    $$ M_{m\times n}(K_S):=\prod_{v \in S} M_{m\times n}(K_v).$$
    Elements of $M_{m\times n}(K_S)$ will be denoted by $g=\left(g^{(v)}\right)_v,$ where $g^{(v)} \in M_{m\times n}(K_v).$
    Similarly,
    $$\GL(m,K_S):=\prod_{v \in S}\GL(m,K_v)$$
    and
    $$\GL^1(m,K_S):=\left\{\left(g^{(v)}\right)_{v \in S} \in \GL(m,K_S) : \prod_{v\in S} \left|\text{det}\left(g^{(v)}\right)\right|_v =1\right\}.$$
    
    \item 
     \begin{equation}
         \begin{array}{rcl}
        \mathfrak{a}^{+}:= \{\mathbf{t}:=(t_{v_1}^{(1)},t_{v_1}^{(2)},\dots,t_{v_1}^{(m+n)},\dots,t_{v_{\ell}}^{(1)},t_{v_{\ell}}^{(2)},\dots,t_{v_{\ell}}^{(m+n)} ) \in \mathbb{R}_{+}^{m+n} \times \dots \times \mathbb{R}_{+}^{m+n}  :\\ t_{v_r}^{(i)} < 1 , \,\, t_{v_r}^{(j)} \geq 1\,\, \text{for} \,\, i=1,\dots,m;\,\, j=m+1,\dots,m+n;\,\, r=1,\dots,\ell  \\  \text{and} \,\, \prod_{v\in S} t_{v}^{(1)} \dots t_{v}^{(m+n)}=(\text{const}_K)^{m+n} \}, &  \\
         & 
    \end{array}
     \end{equation}
     where $\mathbb{R}^+$ is the set of positive real numbers.
     
     \item Now let $\mathcal{I}$ be a subset of $\mathfrak{a}^+$ defined by
     \begin{equation}
         \begin{array}{rcl}
        \mathcal{I}:= \{\mathbf{t} \in \mathfrak{a}^+  : t_{v_r}^{(1)}=\dots=t_{v_r}^{(m)}, t_{v_r}^{(m+1)}=\dots=t_{v_r}^{(m+n)} \,\, \text{for} \,\, r=1,\dots,\ell \}.
    \end{array}
     \end{equation}
    \item Let $S$ contain precisely all archimedian places of $K$. Given  $\mathbf{t} \in \mathcal{I},$ let $t_{v_r}:=t_{v_r}^{(1)}=\dots=t_{v_r}^{(m)}$ and $t'_{v_r}:=t_{v_r}^{(m+1)}=\dots=t_{v_r}^{(m+n)}$ for $r=1,\dots,\ell.$ Keeping this in mind, let  $\mathcal{I}_0$  be the subset of $\mathcal{I}$ defined as follows:
     \begin{equation}
         \begin{array}{rcl}
        \mathcal{I}_0 := \{ \mathbf{t} \in \mathcal{I}:t_{v_r}=t_{v_s} \,\, \text{and} \,\, t'_{v_r}=t'_{v_s} \,\, \text{for}\,\, r,s=1,\dots,\ell \}.
    \end{array}
     \end{equation}
     $\mathcal{I}_0$ will be called the \emph{central ray} of $\mathfrak{a}^+.$
    
    \item For $ \mathbf{t} \in \mathfrak{a}^{+}, $ define $\|\mathbf{t}\|_{\infty}= \displaystyle \max_{\tiny \begin{array}{rcl}
    i=1,\dots,m+n \\ r=1,\dots,\ell
    \end{array} } t_{v_r}^{(i)}.$
    
    \item For $i=1,\dots,\ell$, let $p_i : \mathfrak{a}^{+} \rightarrow \mathbb{R}_{+}^{m+n}$ be the projection of $\mathfrak{a}^{+}$ on the $i$-th factor $\mathbb{R}_{+}^{m+n}$ defined by
    $$p_i(\mathbf{t})=(t_{v_i}^{(1)},t_{v_i}^{(2)},\dots,t_{v_i}^{(m+n)}) \,\, \,\, \,\, \forall \,\, \mathbf{t} \in \mathfrak{a}^{+} .$$
    
    \item We will say that a subset $\mathcal{T}$ of $\mathfrak{a}^{+}$ is $S$-unbounded  if $p_i|_{\mathcal T}$ is unbounded for each $i=1,\dots,\ell.$
\end{itemize}
Now we define Dirichlet improvability in this context.  Let $\mathfrak{a}^{+}$ be as above and $\mathcal T $ be a $S$-unbounded subset of $\mathfrak{a}^{+}$.
\begin{definition}\label{defn:DI imp} Given $A=(A_v)_{v}\in M_{m\times n}(K_S),$ where $A_v$ is a $m \times n$ matrix over $K_v$ for each $v\in S$,  and $0 < \varepsilon < 1$. We say that Dirichlet's theorem can be $\varepsilon $-improved for $A$ along $ \mathcal{T} $ or use the notation $A \in DI_{\varepsilon}(m,n,K_S,\mathcal T)$ if there is $t_0>0$ such that given any $\varepsilon_{v}^{(1)}, \varepsilon_{v}^{(2)}, \dots , \varepsilon_{v}^{(m)},\delta_{v}^{(1)}, \delta_{v}^{(2)}, \dots, \delta_{v}^{(n)} \in K_v \setminus \{0\}$ for each $v \in S$  with 
\begin{equation}\label{equ:dimp}
    \begin{array}{rcl}
      \mathbf{t}:= ( \,\, |\varepsilon_{v_1}^{(1)}|_{v_1},\dots,|\varepsilon_{v_1}^{(m)}|_{v_1},|\delta_{v_1}^{(1)}|_{v_1} ,\dots,|\delta_{v_1}^{(n)}|_{v_1}    ,\dots,|\varepsilon_{v_{\ell}}^{(1)}|_{v_{\ell}},\dots,|\varepsilon_{v_{\ell}}^{(m)}|_{v_{\ell}}, \\ |\delta_{v_{\ell}}^{(1)}|_{v_{\ell}} ,\dots,|\delta_{v_{\ell}}^{(n)}|_{v_{\ell}} )  \in \mathcal{T}
    \end{array}
\end{equation}
and  $\|p_i|_{\mathcal{T}}(\mathbf{t})\|_{\infty} > t_0$ for each $i=1,\dots,\ell,$  there exists $\vec{x}=(x_1,\dots,x_n) \in \mathcal O_S^n \setminus \{\vec{0}\}$ and $\vec{y}=(y_1,\dots,y_m) \in \mathcal O_S^m$ satisfying
\begin{equation}\label{eqn:1.7}\left \{ \begin{array}{rcl} \|A_{v}^{(i)} \vec{x} - y_i \|_v  \leq \varepsilon  \|\varepsilon_{v}^{(i)}\|_v &\mbox{for} &i=1,2,...,m \\ \|x_j\|_v \leq  \varepsilon \|\delta_{v}^{(j)}\|_v &\mbox{for} &j=1,2,...,n\,, \end{array} \right.  
\end{equation}
for all $v \in S,$ where $A_{v}^{(i)}$ is the $i$th row of $A_v$ for $i=1,2,\dots, m$. In particular, a vector $ \mathbf{x}=(\mathbf{x}^{(v)})_v \in K_{S}^{n},$ where $\mathbf{x}^{(v)}=(x_1^{(v)},\dots,x_n^{(v)}) \in K_v^n$ is said to be \textit{Dirichlet $\varepsilon$-improvable along $\mathcal{T}$}
if the corresponding  matrix $A=([x_1^{(v)} \dots x_n^{(v)}])_{v \in S}\in DI_{\varepsilon}(1,n,K_S,\mathcal{T})$. When $\mathcal{T}=\mathcal{I} $, then  simply we will say $A$ is Dirichlet $\varepsilon$-improvable.  
 \end{definition}
 Singular vectors can be defined in the $S$-adic context in the exactly similar way to that of euclidean: 
\begin{definition}\label{defn:sing1} Let $m,n\in \mathbb{N}$ and $\mathcal{T}$ be a $S$-unbounded subset of $\mathfrak{a}^+$. We say $A=(A_v)_{v}\in M_{m\times n}(K_S)$, where $A_v\in M_{m\times n}(K_v)$, for each $v\in S$, is \emph{singular} along $\mathcal{T}$ if for all $\varepsilon \in (0,1)$, $A$ is Dirichlet $\varepsilon$-improvable along $\mathcal{T}$. In particular, a vector $ \mathbf{x}=(\mathbf{x}^{(v)}) \in K_S^n,$ where $\mathbf{x}^{(v)}=(x_1^{(v)},\dots,x_n^{(v)}) \in K_v^n$, for each $v\in S$ is said to be singular along $\mathcal{T}$ if $A=([x_1^{(v)} \cdots x_n^{(v)}])_{v \in S}$ is singular along $\mathcal{T}$. When $S$ consists of precisely all archimedian places of $K$ and $\mathcal{T}=\mathcal{I}_0 $, then  simply we will say $A$ is \emph{singular}. We denote the set of all  vectors of $K_S^n$  that are singular along $\mathcal{T}$ by $Sing(K_S,n,\mathcal{T})$.
\end{definition}

When $K$ is totally real and $S=$ the collection of all archimedian places of $K$, the notion of singular vectors has been introduced in \cite{DR}. 
 \begin{definition}{\cite[Definition 2.2]{DR}}\label{defn:sing2} Let $K$ be a totally real number field and $S$ is the collection of all archimedian places of $K$. We say $\mathbf{x}=(x_1,\dots,x_n) \in K_S^n$ is \emph{singular} if for every $c>0,$ for all sufficiently large $Q>0$ there exists $\mathbf{0} \neq \mathbf{q} \in \mathcal O_K^n,$ $q_0 \in \mathcal O_K$ satisfying the following system 

\begin{equation}\label{equ:6.1}\left \{ \begin{array}{rcl} \| \mathbf{q} \cdot \mathbf{x} + q_0 \|  <  \frac{c}{Q^n},  \\ \|\mathbf{q}\| \leq Q  \,. \end{array} \right.  
\end{equation}
  The set of all singular vectors in $K_S^n $, in this sense, is denoted by $Sing_S^n$. 
\end{definition}
Indeed, 
when $S$ is the set of all archimedian places of $K$, Definition \ref{defn:sing2} is equivalent to our definition. The following proposition establishes that.

\begin{lemma}\label{lemma:equiv}
 $Sing(K_S,n,\mathcal{I}_0)=Sing_S^n$, if $S$ contains precisely all normalized archimedean places of $K$. 
\end{lemma}
\begin{proof}
 By a standard algebraic argument using fractional ideals etc., one can easily see that in this case $\mathcal O_S=\mathcal O_K$. Let $\mathbf{x} \in Sing_S^n$. We will show that $\mathbf{x} \in Sing(K_S,n,\mathcal{I}_0).$ For that let us take $0< \varepsilon < 1.$  Suppose that  
 $\varepsilon_v,$ $\delta_v$ $\in K_v \setminus \{0\}$,  for each $v \in S$ with $|\varepsilon_v|_v=|\varepsilon_{v'}|_{v'}=\varepsilon_S,$ $|\delta_v|_v=|\delta_{v'}|_{v'}=\delta_S$ for all $v,v'\in S$ and $ \varepsilon_S <1$, $\delta_S \gg 1$ with
  $$ \varepsilon_S^{\ell} \delta_S^{n\ell}=(\text{const}_K)^{n+1}.$$

\noindent We choose  $c>0$ and $Q$ sufficiently large so that  

\begin{equation}\label{equ:6.2}
    \left \{ \begin{array}{rcl} \frac{c}{Q^n} < \varepsilon \varepsilon_S  \\ Q \leq \varepsilon \delta_S  \,, \end{array} \right.
\end{equation}   
holds. Since $\mathbf{x}$ is singular, for this chosen $c $ and sufficiently large $Q$ there exists  $\mathbf{0} \neq \mathbf{q} \in \mathcal O_K^n,$ $q_0 \in \mathcal O_K $ such that (\ref{equ:6.1}) holds. Observe that if $\mathbf{x}=(x_1,\dots,x_n) \in K_S^n$ and $x_i=(x_i^{(v)}) \in K_S$ for $i=1,\dots,n,$ then  (\ref{equ:6.1}) can be rewritten as
 \begin{equation}\label{equ:6.3}
     | q_1 x_1^{(v)}+q_2 x_2^{(v)}+ \dots +q_n x_n^{(v)} + q_0|_v \, \leq \, \frac{c}{Q^n} \text{ \,\,and } \displaystyle 
\max_{1\leq j \leq n} |q_j|_v \leq Q \,,
 \end{equation}
for all $v \in S$.\\

Now combining (\ref{equ:6.2}) and (\ref{equ:6.3}), we get that  $\mathbf{x} \in K_S^n$ is Dirichlet $\varepsilon$-improvable along $\mathcal{I}_0$. Therefore we have
$$Sing_S^n \subseteq Sing(K_S,n,\mathcal{I}_0).$$
The other containment  $Sing(K_S,n,\mathcal{I}_0) \subseteq Sing_S^n$ is obvious.
\end{proof}
\begin{remark}
    In the one dimensional case, singular points are precisely the elements of $K$ (\cite[Theorem 2.6]{DR}). In fact, when $K=\mathbb{Q}$, one can adopt the strategy of the proof of \cite[Theorem 2.4]{GG1} and easily see that, if $0<\varepsilon<\frac{1}{2}$, the real numbers for which the Dirichlet's theorem can be improved by $\varepsilon$ amount are precisely the rationals. The authors are grateful to Sanju Velani for drawing the attention of the second named author to this fact in an informal discussion. 
\end{remark}

\noindent Unless specified otherwise, we let  $\mathbf{t}$ stand for (\ref{equ:dimp}) now onward. Before going to our main result, let us recall some terminalogy from \cite{KLW}, \cite{KT} and \cite{KT-pre}. 

\begin{definition}\label{defn:besicovitch}Let $X$ be a metric space. We say that $X$ is $\textit{Besicovitch}$ if for any bounded subset $A$ of $X$ and for any family $\mathcal{B}$ of nonempty open balls in $X$ such that every $x \in A$ is a center of some open ball of $\mathcal{B}$, there exists a constant $N_X$ (called  Besicovitch constant of $X$) and there is a finite or countable subfamily $\{B_i\}$ of $\mathcal{B}$ such that 
\[1_A\leq \displaystyle \sum_i 1_{B_i} \leq N_X\,,\]
i.e. $A \subset \bigcup_i B_i$, and the multiplicity of that subcovering is at most $N_X$. 
\end{definition}
Throughout this article, $N_X$ will always represent Besicovitch constant of the metric space $X$.

\begin{example}
	\begin{enumerate}
		\item By Besicovitch's Covering Theorem \cite[Theorem 2.7]{Mat}, the Euclidean spaces $\mathbb R^n$ are Besicovitch spaces.
		\item   
		The space of our interest $X=\prod_{v \in S}K_v^{l_v},$ where $l_v \in \mathbb N,$ is also Besicovitch \cite[Section 1.6]{KT-pre}.
	\end{enumerate}

\end{example}

Let $X$ be a Besicovitch metric space and $(F,|\cdot|)$ is a valued field. Now for a subset $B$ of $X$ and a given function $f : X \rightarrow F$, we set 
$$\|f\|_B := \displaystyle \sup_{x\in B} |f(x)|.$$
Now if $f: X \rightarrow F$ is a measurable function, $\mu$ is a Radon measure on $X$ and  $B$ is a subset of $X$ with $\mu(B)>0$, we define $$\|f\|_{\mu,B} :=\|f\|_{B\cap \text{ supp }(\mu)} .$$ 

\begin{definition}\label{defn:C,alpha}
 Given $C,\alpha >0$, open $U \subset X $ and a Radon measure $\mu$ on $X$, $f: X \rightarrow F$ is said to be $(C,\alpha)-\textit{good}$ on $U$ with respect to $\mu$ if for every ball $B\subset U$
 with center in $\text{supp }(\mu)$ and any $\varepsilon >0$ one has
 \[\mu(\{x\in B: |f(x)| < \varepsilon\})\leq C\left(\frac{\varepsilon}{||f||_{\mu,B}}\right)^{\alpha} \mu(B)\,.\]
\end{definition}
The properties listed below follow immediately from  Definition \ref{defn:C,alpha}.
\begin{lemma}\label{lem:C,alpha} Let $X,U,\mu,F, f, C,\alpha,$ be as given above. Then we have 
\begin{enumerate}[(a)]
   \item $f$ \text{ is } $(C,\alpha)-good$ \text{ on  }$U$ with respect to $\mu \Longleftrightarrow \text{ so is } |f|$.
   \item $f$ is $(C,\alpha)-good$ on $U$  with respect to $\mu$ $\Longrightarrow$ so is $c f$ for all $c \in F$.
   \item \label{item:sup} $\forall i\in I, f_i$ are $(C,\alpha)-good$ on $U$ with respect to $\mu$ $\Longrightarrow$ so is $\sup_{i\in I} |f_i|$.
   \item $f$ is $(C,\alpha)-good$ on $U$ with respect to $\mu$, and $c_1\leq |\frac{f(x)}{g(x)}|\leq c_2$ for all $x \in U$  $\Longrightarrow g$ is $(C(\frac{c_2}{c_1})^{\alpha},\alpha)-$ good on $U$ with respect to $\mu$.
 \item Let $C_2 >1$ and 
 $\alpha _2>0$. $f$ is $(C_1,\alpha_1)-good$ on $U$ with respect to $\mu$ and $C_1 \leq C_2, \alpha_2 \leq \alpha_1 \Longrightarrow f$ is
 $(C_2,\alpha_2)-good$ on $U$ with respect to $\mu$.
  \end{enumerate}
\end{lemma}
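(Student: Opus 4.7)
The plan is to verify items (a)--(e) by direct bookkeeping from Definition~\ref{defn:C,alpha}; no machinery beyond the definition itself is required. In every case I work with the two quantities that appear there, namely the $\mu$-measure of $\{x\in B:|f(x)|<\varepsilon\}$ and the sup-norm $\|f\|_{\mu,B}$ over $B\cap\supp(\mu)$. Part (a) is immediate since both of these quantities depend on $f$ only through $|f|$. For (b), when $c=0$ the claim is trivial; for $c\neq 0$, I substitute $\varepsilon\mapsto\varepsilon/|c|$ in the inequality for $f$ and use $\|cf\|_{\mu,B}=|c|\,\|f\|_{\mu,B}$ to rewrite the bound in the desired form.

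For (c), the key observation is the inclusion $\{x\in B:\sup_{i\in I}|f_i(x)|<\varepsilon\}\subseteq\{x\in B:|f_{i_0}(x)|<\varepsilon\}$ valid for every choice of index $i_0\in I$. Applying the $(C,\alpha)$-good inequality for $f_{i_0}$ yields $\mu\bigl(\{x\in B:\sup_i|f_i(x)|<\varepsilon\}\bigr)\leq C\bigl(\varepsilon/\|f_{i_0}\|_{\mu,B}\bigr)^{\alpha}\mu(B)$, and then taking $i_0$ so that $\|f_{i_0}\|_{\mu,B}$ approaches $\sup_{i\in I}\|f_i\|_{\mu,B}=\|\sup_{i\in I}|f_i|\|_{\mu,B}$ delivers the desired estimate. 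For (d), the hypothesis gives the two-sided comparison $|f|/c_2\leq|g|\leq|f|/c_1$, hence $\{|g|<\varepsilon\}\subseteq\{|f|<c_2\varepsilon\}$ and $\|f\|_{\mu,B}\leq c_2\,\|g\|_{\mu,B}$; applying the $(C,\alpha)$-good inequality for $f$ at scale $c_2\varepsilon$ and re-expressing in terms of $\|g\|_{\mu,B}$ produces exactly the factor $C(c_2/c_1)^{\alpha}$ announced in the statement.

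Part (e) is a pure monotonicity argument. The inequality is vacuous whenever $\varepsilon>\|f\|_{\mu,B}$ (the right-hand side then exceeds $\mu(B)$ as soon as $C_1\geq 1$), so one may assume $\varepsilon/\|f\|_{\mu,B}\in(0,1]$. Since this ratio lies in $(0,1]$ and $\alpha_2\leq\alpha_1$, one has $(\varepsilon/\|f\|_{\mu,B})^{\alpha_1}\leq(\varepsilon/\|f\|_{\mu,B})^{\alpha_2}$; combining this with $C_1\leq C_2$ upgrades the $(C_1,\alpha_1)$-good inequality to a $(C_2,\alpha_2)$-good one. The only mildly non-routine step I foresee is the passage to the supremum in (c) when $I$ is infinite, which is settled by the approximation argument described above; the remaining items are formal.
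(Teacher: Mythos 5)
The paper offers no proof of this lemma (it is recorded as following immediately from Definition \ref{defn:C,alpha}), so your direct verification from the definition is exactly the intended argument; parts (a), (b), (c) and the monotonicity argument in (e) are correct, and in (c) the identification $\|\sup_{i\in I}|f_i|\|_{\mu,B}=\sup_{i\in I}\|f_i\|_{\mu,B}$ together with the limiting choice of $i_0$ is the right point to make. One correction in (d): the norm comparison you quote, $\|f\|_{\mu,B}\le c_2\|g\|_{\mu,B}$, goes the wrong way for re-expressing $C\bigl(c_2\varepsilon/\|f\|_{\mu,B}\bigr)^{\alpha}$ in terms of $g$; what you actually need is the lower bound $\|f\|_{\mu,B}\ge c_1\|g\|_{\mu,B}$, which follows from the other half of your own two-sided comparison ($|f|\ge c_1|g|$ on $U$), and then $C\bigl(c_2\varepsilon/\|f\|_{\mu,B}\bigr)^{\alpha}\le C\,(c_2/c_1)^{\alpha}\bigl(\varepsilon/\|g\|_{\mu,B}\bigr)^{\alpha}$ gives the stated constant. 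Also, in (e) the vacuous case $\varepsilon>\|f\|_{\mu,B}$ should be justified by the hypothesis $C_2>1$, which is given, rather than by $C_1\ge 1$, which is not; with these two small repairs the write-up is complete.
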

We recall an important lemma from \cite[Section 2.3]{KT-pre}, which will be useful later in our discussion.
\begin{lemma}\label{lem:pr} For each $i=1,\dots,m,$ let $X_i$  be a  metric space, $\mu_i$ be a measure on $X_i,$ $U_i \subseteq X_i $ open, $f_i : X_i \rightarrow \mathbb R$ be a function which is $(C_i,\alpha_i)$-good on $U_i$ with respect to $\mu_i.$ Then the function $f : \prod_{i=1}^{m}X_i \rightarrow  \mathbb R$ defined by $f(x_1,\dots,x_m)=f(x_1)f(x_2) \dots f(x_m)$ is $(C,\alpha)$-good on $U_1 \times \dots \times U_m$ with respect to $\mu_1 \times \dots \times \mu_m,$ with $C$ and $\alpha$ explicitly computed in terms of $C_i, \alpha_i$ for $i=1,\dots,m.$
 
\end{lemma}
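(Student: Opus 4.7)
The plan is to induct on $m$, reducing to $m=2$, and for that case to apply a threshold-splitting argument to the sublevel set $\{|f_1 f_2| < \varepsilon\}$. Two preliminary observations streamline the reduction. First, the paper equips products with the sup metric, so every ball in $\prod_i X_i$ factors as a product $B = B_1 \times \cdots \times B_m$ of balls $B_i \subseteq X_i$, and because $\supp(\mu_1 \times \cdots \times \mu_m) = \prod_i \supp(\mu_i)$, any center of $B$ in $\supp(\mu_1 \times \cdots \times \mu_m)$ projects to centers of the $B_i$ in $\supp(\mu_i)$. Second, for $F(x_1,\dots,x_m) = \prod_i f_i(x_i)$ one has $\|F\|_{\mu_1 \times \cdots \times \mu_m,\,B} = \prod_i \|f_i\|_{\mu_i, B_i}$, since each factor is nonnegative and the sup of a product of nonnegative functions is the product of the sups.

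For $m=2$, fix $B = B_1 \times B_2$ with center in $\supp(\mu_1 \times \mu_2)$, set $M_i := \|f_i\|_{\mu_i, B_i}$ and $M := M_1 M_2$, and assume $\varepsilon < M$ (otherwise the desired inequality is trivial as long as $C \geq 1$). For any $\tau > 0$ one has the elementary inclusion
\[
\{(x_1, x_2) \in B : |f_1(x_1) f_2(x_2)| < \varepsilon\} \subseteq \bigl(\{x_1 \in B_1 : |f_1(x_1)| < \tau\} \times B_2\bigr) \cup \bigl(B_1 \times \{x_2 \in B_2 : |f_2(x_2)| < \varepsilon/\tau\}\bigr),
\]
obtained by contraposition of the implication $|f_1(x_1)| \geq \tau$ and $|f_2(x_2)| \geq \varepsilon/\tau \Rightarrow |f_1(x_1) f_2(x_2)| \geq \varepsilon$. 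Applying the $(C_i,\alpha_i)$-good hypothesis in each factor (legitimate since $B_i$ has center in $\supp(\mu_i)$) together with the product-measure identity $(\mu_1 \times \mu_2)(A_1 \times A_2) = \mu_1(A_1)\mu_2(A_2)$ yields
\[
(\mu_1 \times \mu_2)\bigl(\{|f_1 f_2| < \varepsilon\}\bigr) \leq \Bigl(C_1(\tau/M_1)^{\alpha_1} + C_2(\varepsilon/(\tau M_2))^{\alpha_2}\Bigr)(\mu_1 \times \mu_2)(B).
\]
Now choose $\tau$ to equalize the two summands, i.e.\ $(\tau/M_1)^{\alpha_1} = (\varepsilon/(\tau M_2))^{\alpha_2}$; each side then equals $(\varepsilon/M)^{\alpha}$ with $\alpha = \alpha_1\alpha_2/(\alpha_1+\alpha_2)$, so $F$ is $(C_1 + C_2,\,\alpha)$-good on $U_1 \times U_2$ with respect to $\mu_1 \times \mu_2$.

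The general case follows by induction: if $g := f_1 \cdots f_{m-1}$ is $(\tilde C,\tilde\alpha)$-good on $\prod_{i<m} U_i$ with respect to $\prod_{i<m}\mu_i$ by the inductive hypothesis, applying the $m=2$ result to the pair $(g, f_m)$ on $\bigl(\prod_{i<m} X_i\bigr) \times X_m$ delivers the claim for $m$. Iterating gives the explicit formulas $1/\alpha = \sum_i 1/\alpha_i$ and $C = C_1 + \cdots + C_m$. The main point requiring care is the product-metric factorization of balls and of $\|F\|_{\mu, B}$, so that the factorwise hypothesis transfers to projections; once this is set up, the remaining two-power balancing is completely routine.
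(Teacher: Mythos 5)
Your argument is correct, and in fact there is nothing in the paper to compare it against line by line: the authors do not prove this lemma at all, they simply quote it from \cite{KT-pre} (\S 2.3), so your threshold-splitting proof is a legitimate self-contained substitute and even supplies the ``explicit'' constants the statement alludes to, namely $C=\sum_i C_i$ and $1/\alpha=\sum_i 1/\alpha_i$. The structural points you flag are exactly the right ones: with the sup metric a ball in the product is a product of balls of the same radius, $\supp(\mu_1\times\cdots\times\mu_m)=\prod_i\supp(\mu_i)$, and $\|F\|_{\mu,B}=\prod_i\|f_i\|_{\mu_i,B_i}$ because the supremum of a product of nonnegative functions of independent variables factors; the two-variable inclusion, the application of factorwise goodness, and the balancing $u^{\alpha_1}=((\varepsilon/M)/u)^{\alpha_2}$ giving exponent $\alpha_1\alpha_2/(\alpha_1+\alpha_2)$ are all checked correctly, and the induction via the pair $(f_1\cdots f_{m-1},f_m)$ is legitimate because your two-factor argument used nothing beyond the metric/measure structure. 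Two cosmetic remarks: your case split $\varepsilon\ge M$ is unnecessary, since the factorwise goodness inequalities hold for \emph{every} threshold, so the same computation covers all $\varepsilon>0$ (alternatively, note $C_i\ge 1$ is automatic for any nontrivially $(C_i,\alpha_i)$-good function, so the trivial bound also works); and you should set aside the degenerate situations $\mu_i(B_i)=0$ or $\|f_i\|_{\mu_i,B_i}\in\{0,\infty\}$, where the definition itself is vacuous or forces the sublevel sets to be null --- the same convention the paper tacitly uses. Finally, ``choose $\tau$ to equalize the two summands'' is a slight misstatement (you equalize the two power factors, not the summands including $C_1,C_2$), but the resulting bound $(C_1+C_2)(\varepsilon/M)^{\alpha}\mu(B)$ is exactly as claimed.
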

Now given a Randon measure $\mu$ on $X$, an open subset $U$ of $X$ with $\mu(U)>0$ and a map $\mathbf{f}=(f_1,f_2,\dots,f_m) : U \rightarrow F^m$, say that the pair $(\mathbf{f},\mu)$ is $(C,\alpha)-good$ on $U$ if any $F$-linear combination of $ 1,f_1,\dots,f_m $ is $(C,\alpha)$ good on $U$ with respect to $\mu.$

\begin{definition}
 Let $\mathbf{f}=(f_1,f_2, \dots,f_m)$ be a map from $U$ to $F^m,$ where $m \in \mathbb N.$ We say that $(\mathbf{f},\mu)$ is \emph{nonplanar} on $U$ if for any ball $B \subset U$ centered in $\text{supp}(\mu),$ the restrictions of $1,f_1,f_2, \dots,f_m$ to $B\cap \text{ supp }(\mu)$ are linearly independent over $F$; in other words, if  $\mathbf{f}(B\cap \text{ supp }(\mu))$ is not contained in any proper affine subspace of $F^m.$
\end{definition}

If $B=B(x,r),$ where $x\in X$ and $r>0$, is a ball in $X$ and $c>0$, we will use the notation $cB$ to denote the ball $B(x,cr).$
\begin{definition}
 Let $\mu$ be a Radon measure on $X$ and $D>0.$ $\mu$ is said to be $D$-$\textit{Federer}$ on an open subset $U$ of $X$ if for all balls $B$ centered at $\text{ supp }(\mu)$ with $3B\subset U$, one has 
 \[\frac{\mu(3B)}{\mu(B)}\leq D\,.\]
\end{definition}

\noindent We finish this section by stating the  main result of this paper, which says that for sufficiently small $\varepsilon>0$, $\mu$-almost every vector lying in the image of a good and nonplanar map is not Dirichlet $\varepsilon$-improvable along $\mathcal{T}$ for a large class of measures $\mu$ in this $S$-adic set-up.

\begin{theorem}\label{thm:1.4}Let $X= \prod_{v \in S}X_v,$ where $X_v=K_{v}^{l_v}$ with $l_v \in \mathbb N$ for $v\in S$, which is a Besicovitch metric space, $\mu_v$ be a measure on $X_v$ $(v \in S)$ so that   $\mu=\prod_{ v \in S} \mu_v$ is $D$-Federer measure on $X,$ and $U=\prod_{v \in S} U_v \subseteq X$ be open with $\mu(U)>0$ and let $ \mathbf{f}=\left(\mathbf{f}^{(v)}\right)_v:U \rightarrow K_S^n$  be a  map, where $\mathbf{f}^{(v)}$ are continuous maps from $U_v \subseteq K_{v}^{l_v}$ to $K_v^n$ such that $(\mathbf{f}^{(v)},\mu_v)$ is nonplanar on $U_v$ for all $v \in S$ and for each $v\in S,$ $(\mathbf{f}^{(v)},\mu_v)$ is $(C_v,\alpha_v)$-good on $U_v.$ Then there exists $\varepsilon_0=\varepsilon_0(n,C_{v_1}, \dots, C_{v_\ell}, \alpha_{v_1}, \dots, \alpha_{v_\ell}, D,N_X,K,S)>0$ such that for any $\varepsilon<\varepsilon_0$ 

\begin{equation}
  \mathbf{f}_{*}\mu(DI_{\varepsilon}(1,n,K_S,\mathcal{T}))=0 \,\,\text{ for any S-unbounded \,}\mathcal{T}\subseteq \mathfrak{a}^{+}\,.
 \end{equation}
\end{theorem}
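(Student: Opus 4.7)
The plan is to follow the Dani-type correspondence approach of Kleinbock--Weiss \cite{KW1} and Ganguly--Ghosh \cite{GG}, translating Dirichlet $\varepsilon$-improvability of $\mathbf{f}(\mathbf{x})$ into a recurrence statement for certain $S$-arithmetic unimodular lattices, and then invoking the $S$-adic quantitative nondivergence estimate of Kleinbock--Tomanov (\cite[\S 6.6]{KT-pre}) to control the $\mu$-measure of the set where this recurrence holds.

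First I would set up the correspondence. For $\mathbf{t} \in \mathfrak{a}^+$ of the form (\ref{equ:dimp}) with $m=1$, define the diagonal element $g_{\mathbf{t}} = (g_{\mathbf{t}}^{(v)})_{v\in S}$ by $g_{\mathbf{t}}^{(v)} := \diag\bigl((\varepsilon_v^{(1)})^{-1}, (\delta_v^{(1)})^{-1}, \ldots, (\delta_v^{(n)})^{-1}\bigr)$, and for $\mathbf{y} \in K_S^n$ put $u_{\mathbf{y}}^{(v)} := \bigl(\begin{smallmatrix} 1 & \mathbf{y}^{(v)} \\ 0 & I_n \end{smallmatrix}\bigr)$. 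A short computation that mirrors the proof of Theorem \ref{thm:1.1} rewrites the defining inequalities (\ref{eqn:1.7}) as the statement that, at every place $v \in S$, the vector $g_{\mathbf{t}}^{(v)} u_{\mathbf{y}}^{(v)} \bigl(\begin{smallmatrix}-p\\ \mathbf{q}\end{smallmatrix}\bigr)$ has every coordinate bounded in $|\cdot|_v$ by $\varepsilon$. Hence $\mathbf{y} \in \di(1,n,K_S,\mathcal{T})$ iff there exists $t_0>0$ such that for every $\mathbf{t} \in \mathcal{T}$ with $\|p_i(\mathbf{t})\|_\infty > t_0$ $(i=1,\dots,\ell)$ the lattice $g_{\mathbf{t}} u_{\mathbf{y}} \mathcal{O}_S^{n+1}$ contains a nonzero vector $w=(w^{(v)})_v$ with $\|w^{(v)}\|_{v,2} \leq C_0 \varepsilon$ for every $v$, where $C_0=C_0(n,K,S)$ absorbs the passage from per-coordinate to $\|\cdot\|_{v,2}$ bounds at Archimedean places.

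Next, set $E_\varepsilon := \{\mathbf{x} \in U : \mathbf{f}(\mathbf{x}) \in \di(1,n,K_S,\mathcal{T})\}$ and argue by contradiction, assuming $\mu(E_\varepsilon) > 0$. Because $X$ is Besicovitch and $\mu$ is $D$-Federer, the Lebesgue density theorem holds for $\mu$, so I may fix a ball $B_0 \subset U$ centered at $\supp(\mu)$ with $\mu(B_0 \cap E_\varepsilon) > \tfrac{1}{2}\mu(B_0)$. Using $S$-unboundedness of $\mathcal{T}$, I then pick a single $\mathbf{t} \in \mathcal{T}$ with each $\|p_i(\mathbf{t})\|_\infty$ large enough (depending on the radius of $B_0$, on $t_0$, and on the nonplanarity data) to ensure the hypotheses of QND hold on $B_0$. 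Applied to the map $\mathbf{x} \mapsto g_{\mathbf{t}} u_{\mathbf{f}(\mathbf{x})} \mathcal{O}_S^{n+1}$, the $S$-adic QND theorem then produces
\[
\mu\bigl(\{\mathbf{x} \in B_0 : g_{\mathbf{t}} u_{\mathbf{f}(\mathbf{x})} \mathcal{O}_S^{n+1}\ \text{has a nonzero vector with}\ \|w^{(v)}\|_{v,2}\leq C_0\varepsilon\ \forall v\}\bigr) \leq C^{\dagger} \varepsilon^{\alpha^{\dagger}} \mu(B_0),
\]
where $C^{\dagger}, \alpha^{\dagger}$ depend only on $n, \{C_v,\alpha_v\}_{v\in S}, D, N_X, K, S$. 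The $(C',\alpha')$-good hypothesis of QND for the wedge-coordinate functions $\mathbf{x} \mapsto g_{\mathbf{t}} u_{\mathbf{f}(\mathbf{x})} w$ (for $w$ a primitive pure wedge in $\bigwedge^r \mathcal{O}_S^{n+1}$) follows by expanding them as $K_v$-linear combinations of products of the coordinates $f_j^{(v)}$ and invoking Lemma \ref{lem:C,alpha} together with the product Lemma \ref{lem:pr}, while nonplanarity of each $(\mathbf{f}^{(v)},\mu_v)$ supplies the non-contraction lower bounds on $\|\cdot\|_{\mu,B_0}$ that QND requires. Combining the inclusion from the previous paragraph with this bound gives $\tfrac{1}{2}\mu(B_0) < C^{\dagger} \varepsilon^{\alpha^{\dagger}}\mu(B_0)$; choosing $\varepsilon_0$ so that $C^{\dagger} \varepsilon_0^{\alpha^{\dagger}} < 1/2$ yields the desired contradiction for every $\varepsilon < \varepsilon_0$, so $\mu(E_\varepsilon)=0$.

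The main obstacle I expect is the careful invocation of QND across all places of $S$ simultaneously: combining the per-place $(C_v,\alpha_v)$-good constants via Lemma \ref{lem:pr} into a single pair $(C',\alpha')$ that stays uniform as $\|\mathbf{t}\|_\infty\to\infty$, translating the per-place nonplanarity conditions into uniform lower bounds for $\|\cdot\|_{\mu,B_0}$ on every primitive wedge, and quantifying how large each $\|p_i(\mathbf{t})\|_\infty$ must be (relative to $r(B_0)$) so that the event ``$\varepsilon$-short vector simultaneously at every $v\in S$'' is exactly the event QND bounds. This is also where the explicit dependence of $\varepsilon_0$ on the parameters listed in the theorem arises.
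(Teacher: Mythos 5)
Your proposal follows essentially the same route as the paper's own proof: the same correspondence translating Dirichlet $\varepsilon$-improvability of $\mathbf{f}(\mathbf{x})$ into the existence of a short vector in the lattice $\left(g_{\varepsilon_v,\delta_v}\right)_{v}\left(g_{\mathbf{f}^{(v)}(x_v)}\right)_{v}\mathcal O_S^{n+1}$, followed by the Kleinbock--Tomanov $S$-arithmetic quantitative nondivergence estimate with condition (C1) checked via Lemma \ref{lem:C,alpha} and Lemma \ref{lem:pr} (for $m=1$ the wedge coordinates are in fact just linear combinations of $1,f_1^{(v)},\dots,f_n^{(v)}$, no products within a place are needed) and (C2) via nonplanarity, and finally a density-point argument to get measure zero. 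The obstacle you flag, uniformity of the (C2) lower bound as $\|\mathbf{t}\|_{\infty}\to\infty$, is resolved in the paper exactly by the constraint $\prod_{v\in S}|\varepsilon_{v}^{(1)}|_v|\delta_{v}^{(1)}|_v\cdots|\delta_{v}^{(n)}|_v=(\mathrm{const}_K)^{n+1}$ built into $\mathfrak{a}^{+}$, the bounds $|\delta_v^{(i)}|_v\geq 1$, and the product formula applied to the $\mathcal O_S$-coordinates $w_I$ of the wedge vector, which together give a $\rho$ independent of $\mathbf{t}$; also note that in your contradiction step the threshold $t_0$ depends on the point, so one should first decompose $DI_{\varepsilon}$ into the countable union of sets with $t_0\leq N$ before choosing the single $\mathbf{t}$.
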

  \begin{corollary}\label{cor:sing}
 Let the  hypothesis be same as that of Theorem \ref{thm:1.4}. Then we have,
 $$\mathbf{f}_* \mu \left(Sing(K_S,n,\mathcal{T})\right)=0, \text{ for any $S$-unbounded }\mathcal{T}\subseteq \mathfrak{a}^{+}.$$
 In particular, $\mathbf{f}_* \mu \left(Sing(K_S,n,\mathcal{I}_0)\right)=0.$
 \end{corollary}

In view of Lemma \ref{lemma:equiv}, it is now clear the above-mentioned Theorem \ref{thm:1.4} generalizes the following:
\begin{theorem}{\cite[Theorem 2.3]{DR}}\label{thm:sin} Suppose $X=\prod_{v \in S} X_v$ is a Besicovitch space and $\mu=\prod_{v\in S}\mu_v$ be a Federer measure on $X$ and let $\mathbf{f} : X \rightarrow K_S^n,$ $\mathbf{f}(x)=(\mathbf{f}_v(x_v))_{v\in S}$ be a continuous map such that $(\mathbf{f},\mu)$ is nonplanar for $\mu$-almost every point of $X$ and for each $v \in S,$ $(\mathbf{f}_v,\mu_v)$ is good for $\mu_v$-almost every point of $X_v.$ Then $\mathbf{f}_* \mu (Sing_S^n)=0$.  
\end{theorem}

The key idea to prove Theorem \ref{thm:1.4} is the so-called `quantitative nondivergence' technique, a generalization of non-divergence of unipotent flows on homogeneous spaces. To apply this technique first we need to translate our problem related to Dirichlet $\varepsilon$-improvability into certain properties of flows on some homogeneous spaces, which is the main theme of our next section.

\section{The correspondence}\label{section:2}
\noindent Define 
$$\Omega_{S,m}:= \left\{g\mathcal O_S^m : g \in \GL(m,K_S)\right\}$$
and $$\Omega_{S,m}^1:= \left\{\Lambda \in \Omega_{S,m} : \text{cov}(\Lambda)=(\sqrt{D_K})^m \right\}.$$

Let $G:= \GL^1\left(m,K_S\right)$ and $\Gamma:=\GL\left(m,\mathcal O_S\right),$ and let $\pi : G \rightarrow G/\Gamma$ be the quotient map. Then $G$ acts on $G/\Gamma$ by left translation via the rule $g\pi(h)=\pi(gh)$ for $g,h \in G.$ We define a map $\tau : M_{m \times n}(K_S) \longrightarrow \GL^1\left(m+n,K_S\right)$ given by
$$\tau \left(\left(A_v\right)_{v }\right)=\left(g_{A_v}\right)_{v },$$
where $$g_{A_v}:= \begin{bmatrix}
                 I_m& A_v\\0& I_n
\end{bmatrix}.$$
We put $\overline{\tau}:=\pi \circ \tau.$ Now the homogeneous space $\GL^1\left(m,K_S\right)/\GL\left(m,O_S\right)$ is naturally identified with $\Omega_{S,m}^1,$ so that we have $\overline{\tau}\left(\left(A_v\right)_{v}\right)=\left(g_{A_v}\right)_{v } \mathcal O_S^{m+n}.$
Define $$\Omega_{\varepsilon} :=\left\{ \Lambda \in \Omega_{S,m+n}:\delta(\Lambda) \geq \varepsilon\right\},$$
where $\delta : \Omega_{S,m+n} \longrightarrow \mathbb R_{+} $ defined by 
$$\delta(\Lambda) :=\min \left\{c(\mathbf{x}): \mathbf{x} \in \Lambda \setminus \{0\}\right\},$$
for all $\Lambda \in \Omega_{S,m+n}.$ 

\noindent Again we let $\varepsilon_{v}^{(1)}, \varepsilon_{v}^{(2)}, \dots , \varepsilon_{v}^{(m)},\delta_{v}^{(1)}, \delta_{v}^{(2)}, \dots, \delta_{v}^{(n)} \in K_v \setminus \{0\}$ for each $v \in S$ and define $g_{\varepsilon_v,\delta_v}$ by
$$g_{\varepsilon_v,\delta_v}:=\text{diag}\left(\left(\varepsilon_{v}^{(1)}\right)^{-1},\dots,\left(\varepsilon_{v}^{(m)}\right)^{-1},\left(\delta_{v}^{(1)}\right)^{-1},\dots,\left(\delta_{v}^{(n)}\right)^{-1}\right).$$
Then clearly $\left(g_{\varepsilon_v,\delta_v}\right)_{v } \in \GL\left(m+n,K_S\right).$ 

Now the following proposition, which may be regarded as an analogue of  \emph{`Dani correspondence'} over number fields, helps us to convert our problem regarding Dirichlet improvability to a problem of homogeneous dynamics.

\begin{proposition}
 $$A=\left(A_v\right)_{v} \in DI_{\varepsilon}\left(m,n,K_S,\mathcal T \right) \implies \left(g_{\varepsilon_v,\delta_v}\right)_{v} \left(g_{A_v}\right)_{v} \mathcal O_S^{m+n} \notin \Omega_{\left(m+n\right)^{|S_r|/2 + |S_c|} \displaystyle \varepsilon},$$
 for all $\varepsilon_{v}^{(1)}, \varepsilon_{v}^{(2)}, \dots , \varepsilon_{v}^{(m)},\delta_{v}^{(1)}, \delta_{v}^{(2)}, \dots, \delta_{v}^{(n)} \in K_v \setminus \{0\}$ (for each $v \in S$) with  $\mathbf{t}\in \mathcal T$ and $\|p_i|_{\mathcal{T}}(\mathbf{t})\|_{\infty} >>1 $ for each $i=1,\dots,\ell$, where $\mathbf{t}$ is given by (\ref{equ:dimp}).

\end{proposition}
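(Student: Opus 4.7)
The plan is to prove this by explicit construction: given the hypothesis $A\in DI_\varepsilon(m,n,K_S,\mathcal{T})$, I will exhibit a nonzero vector in the lattice $\Lambda := (g_{\varepsilon_v,\delta_v})_v(g_{A_v})_v\mathcal{O}_S^{m+n}$ whose content is no larger than $(m+n)^{|S_r|/2+|S_c|}\varepsilon$. This is the standard Dani-type correspondence, adapted to the $S$-adic number field setting: an $\varepsilon$-improved Dirichlet approximant for $A$ is essentially the same thing as a short nonzero vector in the transformed unimodular lattice $\Lambda$.

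Since $\mathbf{t}\in\mathcal{T}$ with $\|p_i|_\mathcal{T}(\mathbf{t})\|_\infty$ sufficiently large for every $i=1,\ldots,\ell$, Definition \ref{defn:DI imp} supplies $\vec{x}\in\mathcal{O}_S^n\setminus\{\vec{0}\}$ and $\vec{y}\in\mathcal{O}_S^m$ satisfying \eqref{eqn:1.7}. Set
\[
\mathbf{w} := (g_{\varepsilon_v,\delta_v})_v(g_{A_v})_v\begin{pmatrix}-\vec{y}\\ \vec{x}\end{pmatrix}\in\Lambda,
\]
which is nonzero because $\vec{x}\neq\vec{0}$ and the transforming matrices are invertible. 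Expanding the block product yields
\[
\mathbf{w}^{(v)}=\Bigl(\bigl((\varepsilon_v^{(i)})^{-1}(A_v^{(i)}\vec{x}-y_i)\bigr)_{i=1}^m,\ \bigl((\delta_v^{(j)})^{-1}x_j\bigr)_{j=1}^n\Bigr),
\]
and \eqref{eqn:1.7} translates directly into $\|w_k^{(v)}\|_v\leq\varepsilon$ for every place $v\in S$ and every coordinate $k$.

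Finally, I bound $c(\mathbf{w})=\prod_{v\in S}\|\mathbf{w}^{(v)}\|_{v,2}$ place by place, using the three distinct definitions of $\|\cdot\|_{v,2}$. At a real $v$ the Euclidean norm gives $\|\mathbf{w}^{(v)}\|_{v,2}\leq\sqrt{m+n}\,\varepsilon$; at a complex $v$ the squared Hermitian norm gives $\|\mathbf{w}^{(v)}\|_{v,2}\leq(m+n)\varepsilon^2$; at a non-Archimedean $v$ the identity $|\cdot|_v=\|\cdot\|_v^{d_v}$ combined with the sup-norm definition of $\|\cdot\|_{v,2}$ gives $\|\mathbf{w}^{(v)}\|_{v,2}\leq\varepsilon^{d_v}$. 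Multiplying across $v\in S$, the combinatorial constants aggregate to $(m+n)^{|S_r|/2+|S_c|}$ and the $\varepsilon$-exponent aggregates to $|S_r|+2|S_c|+\sum_{v\in S_f}d_v=\sum_{v\in S}d_v\geq d\geq 1$, so since $\varepsilon<1$ I obtain $c(\mathbf{w})\leq(m+n)^{|S_r|/2+|S_c|}\varepsilon$. Because $\delta(\Lambda)\leq c(\mathbf{w})$, this yields the desired $\Lambda\notin\Omega_{(m+n)^{|S_r|/2+|S_c|}\varepsilon}$. There is no genuine obstacle in the argument; the only care needed is to keep straight the three conventions in play, namely the normalized absolute value $\|\cdot\|_v$ appearing in \eqref{eqn:1.7}, its $d_v$-th power $|\cdot|_v$ underlying the content, and the three flavours of the block norm $\|\cdot\|_{v,2}$ at real, complex, and non-Archimedean places.
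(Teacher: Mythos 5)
Your proposal is correct and follows essentially the same route as the paper: it takes the approximant $(\vec{x},\vec{y})$ supplied by Definition \ref{defn:DI imp}, forms the corresponding nonzero vector of the lattice $\left(g_{\varepsilon_v,\delta_v}\right)_{v}\left(g_{A_v}\right)_{v}\mathcal O_S^{m+n}$, and bounds its content place by place (real: $\sqrt{m+n}\,\varepsilon$, complex: $(m+n)\varepsilon^2$, finite: at most $\varepsilon$), exactly as in the paper, with your tracking of $\varepsilon^{d_v}$ at the finite places being if anything slightly more careful. The only cosmetic difference is your closing $\leq$ where non-membership in $\Omega_{(m+n)^{|S_r|/2+|S_c|}\varepsilon}$ formally wants a strict inequality, a boundary point the paper's own chain of inequalities treats with the same looseness.
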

\begin{proof}
Let $A=\left(A_v\right)_{v} \in DI_{\varepsilon}\left(m,n,K_S,\mathcal T\right).$ Then  given any $\varepsilon_{v}^{(1)}, \varepsilon_{v}^{(2)}, \dots$ , $\varepsilon_{v}^{(m)},\delta_{v}^{(1)}, \delta_{v}^{(2)}, \dots, \delta_{v}^{(n)}$ in $ K_v \setminus \{0\}$, for $v\in S$, with  $\mathbf{t} \in \mathcal T$, where $\mathbf{t}$ is given by (\ref{equ:dimp}),  and $\|p_i|_{\mathcal{T}}(\mathbf{t})\|_{\infty} >>1 $, for each $i=1,\dots,\ell$, there exists $\vec{x}=(x_1,\dots,x_n) \in \mathcal O_S^n \setminus \{0\}$ and $\vec{y}=(y_1,\dots,y_m) \in \mathcal O_S^m$ such that   (\ref{eqn:1.7}) holds. Now observe that

\begin{equation}\label{eqn:lag}\begin{array}{rcl}\displaystyle \left(g_{\varepsilon_v,\delta_v}\right)_{v} \left(g_{A_v}\right)_{v} \mathcal O_S^{m+n}= \{ ( {\varepsilon_{v}^{(1)}}^{-1}(A_v^{(1)} \vec{b} - a_1), \dots ,{\varepsilon_{v}^{(m)}}^{-1}(A_v^{(m)} \vec{b} - a_m), \\ \displaystyle {\delta_{v}^{(1)}}^{-1} b_1,\dots, {\delta_{v}^{(n)}}^{-1} b_n)_{v }  \\ \displaystyle : \mathbf{z}=(\mathbf{z}^{(v)})_{v }\in \mathcal O_S^{m+n} \} \,,\end{array}
 \end{equation}
where $\mathbf{z}^{(v)}=(\vec{a},\vec{b}) \in K^{m+n}$ for each $v\in S,$   $\vec{a}=(a_1,\dots,a_m) \in \mathcal{O}_S^m$ and $\vec{b}=(b_1,\dots,b_n) \in \mathcal{O}_S^n.$

Now consider $\vec{x}\in \mathcal O_S^n \setminus \{0\}$ and $\vec{y} \in \mathcal O_S^m$ satisfying (\ref{eqn:1.7}) as elements of $ K_S^n \setminus \{0\}$ and $K_S^m$ respectively, and denote them as $\mathbf x=(\mathbf{x}^{(v)})_{v} \in \mathcal O_S^n \setminus \{0\}$ and $\mathbf{y}=(\mathbf{y}^{(v)})_{v } \in \mathcal O_S^m  $  respectively. Then $(\mathbf{x},\mathbf{y}) \in \mathcal O_S^{m+n}$, and for this point let $\mathbf{w}=(\mathbf{w}^{(v)})_{v}$ be the corresponding point on the lattice $\left(g_{\varepsilon_v,\delta_v}\right)_{v} \left(g_{A_v}\right)_{v} \mathcal O_S^{m+n}$. Now using (\ref{equ:dimp}) and (\ref{eqn:1.7}) we get that
\begin{align*}
    \displaystyle c(\mathbf{w})= \prod_{v \in S } \|(\mathbf{w}^{(v)})\|_{v,2 } &\displaystyle = \left(\prod_{v \in S_r}\|\mathbf{w}^{(v)}\|_{v,2} \right) \left(\prod_{v \in S_c}\|\mathbf{w}^{(v)}\|_{v,2} \right) \left(\prod_{v \in S_f}\|\mathbf{w}^{(v)}\|_{v,2} \right) \\&
     \displaystyle < \left(\prod_{v \in S_r} \sqrt{(m+n){\varepsilon}^2} \right) \left(\prod_{v \in S_c} (m+n){\varepsilon}^2 \right) \left(\prod_{v \in S_f} \varepsilon  \right) \\& \displaystyle = \left(\left(m+n\right)^{|S_r|/2} \varepsilon^{|S_r|} \right) \left(\left(m+n\right)^{|S_c|} \varepsilon^{ 2|S_c|}\right) \left(\varepsilon^{|S_f|}\right) \\ & \displaystyle = \left(m+n\right)^{|S_r|/2 + |S_c|} \varepsilon^{|S_r|+2|S_c|+|S_f|} \\& \displaystyle < \left(m+n\right)^{|S_r|/2 + |S_c|} \varepsilon,
\end{align*}

\noindent since $d_v \geq 1,$ $|S_r|+2|S_c|+|S_f| \geq |S| \geq 1$ and $0 < \varepsilon <1,$ where for any finite set $A,$ $|A|$ denotes the number of elements of $A.$

Therefore $ \delta(\Lambda) < \left(m+n\right)^{|S_r|/2 + |S_c|} \varepsilon ,$ where $\Lambda=\left(g_{\varepsilon_v,\delta_v}\right)_{v} \left(g_{A_v}\right)_{v} \mathcal O_S^{m+n} $ and so we finally get our desired result.
 
\end{proof}

Now using the above proposition we can say that given a Besicovitch mectric space $X,$ a Federer measure $\mu$ on $X,$ a open subset $U$ of $X$ with $\mu(U)>0$ and a map $F: U \longrightarrow M_{m \times n}(K_S)$ to prove $F_{*} \mu\left(DI_{\varepsilon}(m,n,K_S,\mathcal T)\right)=\mu\left(F^{-1}\left(DI_{\varepsilon}(m,n,K_S,\mathcal T)\right)\right)=0$ it is enough to show that 
\begin{equation}\label{equ:1.8}
    \mu\left(F^{-1}\left(\left\{(A_v)_{v} \in M_{m \times n}(K_S) : \left(g_{\varepsilon_v,\delta_v}\right)_{v} \left(g_{A_v}\right)_{v} \mathcal O_S^{m+n} \notin \Omega_{\left(m+n\right)^{|S_r|/2 + |S_c|} \displaystyle \varepsilon}\right\}\right)\right)=0,
\end{equation}
for all $\varepsilon_{v}^{(1)}, \varepsilon_{v}^{(2)}, \dots , \varepsilon_{v}^{(m)},\delta_{v}^{(1)}, \delta_{v}^{(2)}, \dots, \delta_{v}^{(n)} \in K_v \setminus \{0\}$ (for each $v\in S$) with $\mathbf{t} \in \mathcal T$ and $\|p_i|_{\mathcal{T}}(\mathbf{t})\|_{\infty} >>1 $ for each $i=1,\dots,\ell,$ where $\mathbf{t}$ is given by (\ref{equ:dimp}).

\noindent Now we suppose that for any ball $B \subseteq U$ centred in $\mbox{supp}\,(\mu)$ there exists a $c\in (0,1)$ such that 
\begin{equation}\label{equ:1.9}\begin{array}{lcl}
    \displaystyle \mu \left(B \cap F^{-1}\left(\left\{(A_v)_{v} \in M_{m \times n}(K_S) : \left(g_{\varepsilon_v,\delta_v}\right)_{v} \overline{\tau}\left(\left(A_v\right)_{v}\right) \notin \Omega_{\left(m+n\right)^{|S_r|/2 + |S_c|} \displaystyle \varepsilon}\right\}\right)\right) \\ \displaystyle = \mu\left(\left\{\mathbf{x} \in B : \left(g_{\varepsilon_v,\delta_v}\right)_{v} \overline{\tau}\left(F(\mathbf{x})\right) \notin \Omega_{\left(m+n\right)^{|S_r|/2 + |S_c|} \displaystyle \varepsilon}\right\}\right) \leq c \mu(B),
    
\end{array}\end{equation}
holds for all $\varepsilon_{v}^{(i)}$'s and $\delta_{v}^{(j)}$'s satisfying the aforementioned conditions.

\noindent If (\ref{equ:1.9}) holds true, then for any ball $B \subseteq U$ centered in $\mbox{supp}\,(\mu)$ we have 
\begin{equation}\label{equ:1.10}\begin{array}{lcl}
\displaystyle \frac{\mu \left(B \cap F^{-1} \left(\left\{(A_v)_{v} \in M_{m \times n}(K_S) : \left(g_{\varepsilon_v,\delta_v}\right)_{v} \overline{\tau}\left(\left(A_v\right)_{v}\right) \notin \Omega_{ \left(m+n\right)^{|S_r|/2 + |S_c|} \displaystyle \varepsilon} \right\}\right)\right)}{\mu(B)} \\ \displaystyle \leq \frac{c\mu(B)}{\mu(B)}=c<1,

\end{array}\end{equation}
for all $\varepsilon_{v}^{(1)}, \varepsilon_{v}^{(2)}, \dots , \varepsilon_{v}^{(m)},\delta_{v}^{(1)}, \delta_{v}^{(2)}, \dots, \delta_{v}^{(n)} \in K_v \setminus \{0\}$ (for each $v\in S$) with $\mathbf{t} \in \mathcal T$ and $\|p_i|_{\mathcal{T}}(\mathbf{t})\|_{\infty} >>1 $ for each $i=1,\dots,\ell.$ 

\noindent Since (\ref{equ:1.10}) holds for any ball $B \subseteq U,$ it follows that no pont $\mathbf{x} \in U \cap \mbox{supp}(\mu)$ is a point of density of the sets 
$$ F^{-1}\left(\left\{(A_v)_{v} \in M_{m \times n}(K_S) : \left(g_{\varepsilon_v,\delta_v}\right)_{v} \overline{\tau}\left((A_v)_{v}\right) \notin \Omega_{ \left(m+n\right)^{|S_r|/2 + |S_c|} \displaystyle \varepsilon} \right\}\right),$$
for all $\varepsilon_{v}^{(1)}, \varepsilon_{v}^{(2)}, \dots , \varepsilon_{v}^{(m)},\delta_{v}^{(1)}, \delta_{v}^{(2)}, \dots, \delta_{v}^{(n)} \in K_v \setminus \{0\}$ (for each $v\in S$) with $\mathbf{t} \in \mathcal T$ and $\|p_i|_{\mathcal{T}}(\mathbf{t})\|_{\infty} >>1 $ for each $i=1,\dots,\ell,$ 
which gives us (\ref{equ:1.8}), because of the following version of the Lebesgue density theorem for our concerned metric space $X=\prod_{v\in S} K_v^{l_v},$ where $l_v \in \mathbb N,$ endowed with the product metric.

\begin{lemma}\label{lemma:den}If $X=\prod_{v\in S} K_v^{l_v},$ where $l_v \in \mathbb N,$ endowed with the product metric is our concerned metric space, $\mu$ be a Radon measure on $X$ and $\Omega $ is any measurable subset of $X.$ Then  almost 
 every point $\mathbf{x}\in \Omega\,\cap \text{ supp }(\mu)$ is a point of density of $\Omega$, that is, 
 $$ \displaystyle \lim_{\tiny \begin{array}{rcl}\mu (B)\rightarrow 0\\ \mathbf{x}\in B\end{array}}\frac{\mu(B\cap \Omega)}{\mu(B)}=1\,.$$
\end{lemma}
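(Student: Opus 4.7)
The approach follows the classical derivation of the Lebesgue density theorem via the Besicovitch covering property, which is available for $X=\prod_{v\in S}K_v^{l_v}$ by the third item of the example following Definition \ref{defn:besicovitch}. The plan is, first, to reduce the claim to showing that for every fixed $\alpha\in(0,1)$ the set
$$E_\alpha:=\left\{\mathbf{x}\in \Omega\cap \text{supp}(\mu)\ :\ \liminf_{r\to 0^+}\frac{\mu(B(\mathbf{x},r)\cap\Omega)}{\mu(B(\mathbf{x},r))}<\alpha\right\}$$
has $\mu$-measure zero; the countable union over $\alpha=1-1/n$ then gives $\mu(B(\mathbf{x},r)\cap\Omega)/\mu(B(\mathbf{x},r))\to 1$ as $r\to 0^+$ for $\mu$-a.e.\ $\mathbf{x}\in\Omega\cap\text{supp}(\mu)$, and the slightly stronger formulation in the statement (limit taken over \emph{all} balls $B\ni \mathbf{x}$ with $\mu(B)\to 0$, not only centred ones) will follow upon invoking the doubling/Federer property that is implicit in the ambient set-up of Theorem \ref{thm:1.4} -- any such $B$ can be sandwiched between centred balls of comparable $\mu$-mass.

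Next I would prove $\mu(E_\alpha)=0$ by a direct covering argument. Since the Radon measure $\mu$ is $\sigma$-finite, it suffices to verify $\mu(E_\alpha\cap K)=0$ for each bounded measurable set $K$. Fix such a $K$ together with $\delta>0$, and use outer regularity of $\mu$ to choose an open set $V\supset \Omega$, contained in a bounded neighbourhood of $K$, with $\mu(V\setminus \Omega)<\delta$. For every $\mathbf{x}\in E_\alpha\cap K$ the definition of $E_\alpha$ supplies arbitrarily small radii $r_\mathbf{x}>0$ with $B(\mathbf{x},r_\mathbf{x})\subset V$ and $\mu(B(\mathbf{x},r_\mathbf{x})\setminus \Omega)>(1-\alpha)\,\mu(B(\mathbf{x},r_\mathbf{x}))$. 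The resulting open cover of $E_\alpha\cap K$ by balls centred in $E_\alpha\cap K$ satisfies the hypotheses of Definition \ref{defn:besicovitch}, so we extract a countable subfamily $\{B_i\}$ that still covers $E_\alpha\cap K$ and has multiplicity at most $N_X$. Summing the local estimate and using the multiplicity bound gives
$$(1-\alpha)\,\mu(E_\alpha\cap K)\ \le\ (1-\alpha)\sum_i\mu(B_i)\ <\ \sum_i\mu(B_i\setminus \Omega)\ \le\ N_X\,\mu(V\setminus \Omega)\ <\ N_X\delta,$$
and letting $\delta\to 0$ yields $\mu(E_\alpha\cap K)=0$.

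No step here is genuinely delicate once the Besicovitch property of $X$ is taken as a given, and I do not expect any serious obstacle; the only mildly subtle point is the passage from density along centred balls of shrinking radius to density along arbitrary balls of shrinking $\mu$-mass containing $\mathbf{x}$, which is handled by the Federer property available in the downstream application. Alternatively one could simply invoke the Besicovitch differentiation theorem applied to the indicator $\mathbf{1}_\Omega$, but the covering-based argument sketched above is self-contained and matches the level of exposition of the cited references \cite{KT,KT-pre}.
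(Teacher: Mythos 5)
Your covering argument for the \emph{centred} form of the statement is correct, and it is exactly the classical Besicovitch-covering proof that the authors have in mind when they leave Lemma \ref{lemma:den} to the reader: the space $X$ is Besicovitch, and your estimate $(1-\alpha)\mu(E_\alpha\cap K)\le N_X\,\mu(V\setminus\Omega)<N_X\delta$ is the standard one. Only cosmetic repairs are needed there: one cannot choose an open $V\supseteq\Omega$ inside a bounded neighbourhood of $K$ unless $\Omega$ is bounded; instead replace $\Omega$ by $\Omega\cap W$ for a bounded open $W$ containing the $1$-neighbourhood of $K$ (legitimate, since the centred balls used eventually lie in $W$, and $\mu(\Omega\cap W)<\infty$ because bounded subsets of $X$ are relatively compact, so outer regularity applies), and work with outer measure if the measurability of $E_\alpha$ is not addressed.

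The genuine gap is the bridge from centred balls $B(\mathbf{x},r)$, $r\to 0$, to the limit actually asserted in the lemma, which runs over \emph{all} balls $B\ni\mathbf{x}$ with $\mu(B)\to 0$. First, the Federer property you invoke is not among the hypotheses of Lemma \ref{lemma:den} ($\mu$ is only assumed Radon; Federer enters only in Theorem \ref{thm:1.4}), so an argument using it does not prove the lemma as stated. Second, and more seriously, the sandwiching claim is false even for Federer measures: take $X=\mathbb{R}^2$ (i.e. $K=\mathbb{Q}$, $S=\{\infty\}$, $l_\infty=2$), let $\mu$ be the $1$-dimensional Hausdorff measure on a line $L$, which is Radon and $D$-Federer with $D=3$, let $\mathbf{x}\in L$ and $B=B(y,s)$ with $y$ at distance $(1-\varepsilon)s$ from $L$ vertically above $\mathbf{x}$. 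Then $\mu(B)\asymp s\sqrt{\varepsilon}$, while the largest centred ball contained in $B$ has $\mu$-mass $\asymp \varepsilon s$ and the smallest centred ball containing $B$ has $\mu$-mass $\asymp s$; so $B$ is not sandwiched between centred balls of comparable $\mu$-mass, and $\mu(B\setminus\Omega)/\mu(B)$ is not controlled by centred ratios in the way you suggest. Note also that balls with small $\mu$-mass need not have small radius, so they need not even lie in the open set $V$ of your covering step, which is a further obstruction to a routine reduction. As written, your proposal therefore establishes only the centred statement. That happens to be all the paper uses: in Section \ref{section:2}, inequality \eqref{equ:1.10} is applied only to balls centred in $\operatorname{supp}(\mu)$, and atoms of $\mu$ are handled directly; so either prove (and use) the lemma in its centred formulation, or give a genuine argument (for instance a Vitali-type covering argument under a doubling hypothesis) for the non-centred, $\mu(B)\to 0$ version rather than the sandwich.
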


The proof of  Lemma \ref{lemma:den} goes along the same line as the Lebesgue Density Theorem for Euclidean spaces with some appropriate modifications. Hence we leave it to the reader.

As the space of our interest $X$ is locally compact, Hausdorff and second countable, to prove  Theorem \ref{thm:1.4}, it is enough to prove the theorem locally, that is, after choosing a suitable   $\varepsilon_0 >0$ it is enough to show that  for all $\mathbf{w} \in U \cap \text{ supp }(\mu),$  there exists a ball $B' \subseteq U$ containing $\mathbf{w}$ such that 
\begin{equation}
    \mu\left(B' \cap \mathbf{f}^{-1}\left(DI_{\varepsilon}(1,n,K_S,\mathcal{T})\right)\right)=\mu\left(\left\{\mathbf{x} \in B': \mathbf{f}(\mathbf{x}) \in DI_{\varepsilon}(1,n,K_S,\mathcal{T})\right\}\right)=0
\end{equation}
for all $\varepsilon < \varepsilon_0.$ And now from the above discussion it follows that Theorem \ref{thm:1.4} is an immediate consequence of the following proposition.

\begin{proposition}\label{main prop}
 For any $l_v,n \in \mathbb{N},$  there exists $\tilde C $ with the following property: whenever a ball $ B=\prod_{v \in S}B(x_v,r)$ centered in $\mbox{supp}\,(\mu)$, a  Radon measure  $\mu=\prod_{v\in S}\mu_v$ on $X=\prod_{v \in S}K_v^{l_v}$ which is D-Federer 
 on $\tilde B:= 3^{n+1} B=\prod_{v \in S}B(x_v,3^{n+1}r)$ with $\mu_v$ is a measure on $K_v^{l_v}$ 
 and $\mathbf{f} = \left(\mathbf{f}^{(v)}\right)_v:\tilde {B} \longrightarrow K_S^n, $   be a given  map, where $\mathbf{f}^{(v)}=\left(f_1^{(v)},\dots,f_n^{(v)}\right)$ are continuous maps from $\tilde{B}_v:=B(x_v,3^{n+1}r) \subseteq K_v^{l_v}$ to $K_v^n$ for each $v\in S$, so that
 \begin{enumerate}[(i)]
\item for some $C_v,\alpha_v>0,$ any linear combination of $1,f_1^{(v)},\dots,f_n^{(v)}$ is $(C_v,\alpha_v)$ good on $\tilde{B}_v$ with respect to $\mu_v$ for all $v \in S$ and\,,
\item the restrictions of $1,f_1^{(v)},\dots,f_n^{(v)}$ to $B(x_v,r) \cap  \text{ supp }(\mu_v)$ are linearly independent over $K_v$ for all $ v\in S $.
 
 \end{enumerate}
\noindent Then  for any $ 0 < \varepsilon \leq \frac{\rho}{\sqrt{D_K}}$ we have
\begin{equation}\label{eqn:prop}
\mu\left(\left\{\mathbf{x}\in B\,:\, \left(g_{\varepsilon_v,\delta_v}\right)_{v} \overline{\tau}\left(\mathbf{f}(\mathbf{x})\right) \notin \Omega_{\left(n+1\right)^{|S_r|/2 + |S_c|} \displaystyle \varepsilon}   \right\}\right)\leq \tilde C \varepsilon ^{\alpha} \mu(B)\,,
\end{equation}
for all $\varepsilon_{v}^{(1)}, \varepsilon_{v}^{(2)}, \dots , \varepsilon_{v}^{(m)},\delta_{v}^{(1)}, \delta_{v}^{(2)}, \dots, \delta_{v}^{(n)} \in K_v \setminus \{0\}$ (for each $v\in S$) with $\mathbf{t} \in \mathcal T$ and $\|p_i|_{\mathcal{T}}(\mathbf{t})\|_{\infty} >>1 $ for each $i=1,\dots,\ell,$  where $\mathbf{t}$ is given by (\ref{equ:dimp}), and $\rho$ is given by (\ref{equ:rho}). $\tilde{C} $ depends only on $(n, C_{v_1}, \dots, C_{v_\ell}, \alpha_{v_1}, \dots, \alpha_{v_\ell},D,N_X,K,S)$ and the expression for $\tilde{C}$ is given by (\ref{equ: ctil}),  
\end{proposition}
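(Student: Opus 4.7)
The plan is to invoke the $S$-arithmetic quantitative nondivergence estimate of Kleinbock and Tomanov (\S6.6 of \cite{KT-pre}). Set $h(\mathbf{x}):=(g_{\varepsilon_v,\delta_v})_v (g_{\mathbf{f}(\mathbf{x})})_v \in \GL(n+1,K_S)$, so that the set appearing in \eqref{eqn:prop} is exactly $\{\mathbf{x}\in B : \delta(h(\mathbf{x})\mathcal O_S^{n+1}) < c_1 \varepsilon\}$ with $c_1 = (n+1)^{|S_r|/2 + |S_c|}$. The KT theorem produces a bound of precisely the shape \eqref{eqn:prop} provided that for every primitive $\mathcal O_S$-submodule $V \subseteq \mathcal O_S^{n+1}$ of rank $k$ with $1 \leq k \leq n$, writing $\mathbf{w}_V = v_1 \wedge \cdots \wedge v_k \in \bigwedge^k\mathcal O_S^{n+1}$ and $\psi_V(\mathbf{x}) := c(h(\mathbf{x})\mathbf{w}_V)$, we verify
\begin{enumerate}[(I)]
\item $\psi_V$ is $(C,\alpha)$-good on $\tilde B$ with respect to $\mu$, with $(C,\alpha)$ uniform in $V$; and
\item $\|\psi_V\|_{\mu, B} \geq \rho^{k}$ for a uniform $\rho$.
\end{enumerate}
So the task reduces entirely to establishing (I) and (II).

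For (I), expand $h^{(v)}\mathbf{w}_V^{(v)} = \sum_{|I|=k} L_I^{(v)}(\mathbf{x})\, e_I$ in the standard basis of $\bigwedge^k K_v^{n+1}$. The block-upper-triangular shape of $g_{\mathbf{f}^{(v)}(\mathbf{x})}$ together with the diagonal form of $g_{\varepsilon_v,\delta_v}$ makes $L_I^{(v)}$ a constant scalar multiple of $c_I$ when $1 \notin I$, and the fixed scalar $(\varepsilon_v^{(1)})^{-1}\prod_{j\in J}(\delta_v^{(j-1)})^{-1}$ times a $K_v$-linear combination of $1, f_1^{(v)}, \ldots, f_n^{(v)}$ when $I = \{1\}\cup J$. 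Hypothesis (i) together with Lemma \ref{lem:C,alpha}(a)(b)(c) then shows that each $|L_I^{(v)}|_v$, hence $\|h^{(v)}\mathbf{w}_V^{(v)}\|_{v,2}$, is $(C_v,\alpha_v)$-good on $\tilde B_v$ with respect to $\mu_v$; finally Lemma \ref{lem:pr} assembles $\psi_V = \prod_{v\in S}\|h^{(v)}\mathbf{w}_V^{(v)}\|_{v,2}$ into a $(C,\alpha)$-good function on $\tilde B$ w.r.t.\ $\mu$ with constants explicit in $(n, C_{v_1}, \ldots, C_{v_\ell}, \alpha_{v_1}, \ldots, \alpha_{v_\ell})$.

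Step (II) is the heart of the argument and splits into cases on $V$. If $\mathbf{w}_V$ lies in $\bigwedge^k\bigl(\bigoplus_{i\geq 2}\mathcal O_S e_i\bigr)$ (equivalently all $c_I$ with $1 \in I$ vanish), then $\|h^{(v)}\mathbf{w}_V^{(v)}\|_{v,2} = \max_{I\subseteq\{2,\ldots,n+1\}} |c_I|_v \prod_{i \in I} |\delta_v^{(i-1)}|_v^{-1}$; primitivity of $V$ together with the product formula \eqref{equ:1.1} (and the fact that $c_I \in \mathcal O_S$ is controlled outside $S$) forces $\prod_{v\in S} \max_I |c_I|_v$ to be bounded below, from which the content bound is read off. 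Otherwise some $L_{\{1\}\cup J}^{(v)}$ is a nontrivial $K_v$-linear combination of $1, f_1^{(v)}, \ldots, f_n^{(v)}$, and nonplanarity (hypothesis (ii)) combined with finite-dimensional compactness on the sphere of normalized coefficients gives a place-wise constant $\kappa_v > 0$, depending only on $(\mathbf{f}^{(v)}, x_v, r, \mu_v)$, below which the $\mu_v$-sup norm of any normalized combination on $B(x_v, r)$ cannot fall. Multiplying through by the scaling factors $(\varepsilon_v^{(1)})^{-1}\prod_{j \in J} (\delta_v^{(j-1)})^{-1}$, using $|\delta_v^{(j)}|_v \geq 1$, $|\varepsilon_v^{(1)}|_v < 1$, the normalization $\prod_{v\in S}\prod_i t_v^{(i)} = (\mathrm{const}_K)^{n+1}$, and the growth hypothesis $\|p_i|_\mathcal{T}(\mathbf{t})\|_\infty \gg 1$ to absorb any unfavourable factors, we extract a single $\rho$ --- which becomes the $\rho$ of \eqref{equ:rho} --- with $\|\psi_V\|_{\mu, B} \geq \rho^{k}$ holding uniformly in $V$ and in the admissible scaling parameters.

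Plugging the resulting $(C, \alpha)$ and $\rho$ into the KT nondivergence theorem now yields \eqref{eqn:prop} with $\tilde C$ computed as in \eqref{equ: ctil} from the inputs $(n, C_{v_1}, \ldots, C_{v_\ell}, \alpha_{v_1}, \ldots, \alpha_{v_\ell}, D, N_X, K, S)$. The chief obstacle is (II): producing a \emph{single} $\rho > 0$ that works simultaneously across all primitive $V \subset \mathcal O_S^{n+1}$ and across the full range of admissible $(\varepsilon_v, \delta_v)$ requires combining the global number-theoretic input of primitivity (via the product formula, involving places outside $S$) with the place-wise analytic consequences of nonplanarity and with the growth constraint on $\mathbf{t} \in \mathcal{T}$, the latter crucially relying on the $S$-unboundedness of $\mathcal T$ to force blow-up of the scaling parameters at every factor simultaneously.
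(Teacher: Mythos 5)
Your overall strategy coincides with the paper's: reduce to the Kleinbock--Tomanov quantitative nondivergence theorem applied to $h(\mathbf{x})=\left(g_{\varepsilon_v,\delta_v}\right)_{v}\left(g_{\mathbf{f}^{(v)}(x_v)}\right)_{v}$, verify the goodness condition exactly as you do (hypothesis (i) together with Lemma \ref{lem:C,alpha} and Lemma \ref{lem:pr}), and verify the lower-bound condition from nonplanarity via compactness of the unit sphere plus the product formula and the normalization $\prod_{v\in S}|\varepsilon_{v}^{(1)}|_v|\delta_{v}^{(1)}|_v\cdots|\delta_{v}^{(n)}|_v=(\text{const}_K)^{n+1}$ built into $\mathfrak{a}^{+}$. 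However, your step (II) --- which you rightly call the heart of the matter --- has a genuine gap in its first case. When $\mathbf{w}_V$ involves no wedge monomial containing the distinguished $\varepsilon$-scaled basis vector (your $e_1$, the paper's $\mathbf{e}_0$), you assert $\|h^{(v)}\mathbf{w}_V^{(v)}\|_{v,2}=\max_{I}|c_I|_v\prod_{i\in I}|\delta_v^{(i-1)}|_v^{-1}$ and extract the lower bound from primitivity and the product formula alone. This fails twice over. First, the identity is false: since $g_{\mathbf{f}^{(v)}}\mathbf{e}_i^{(v)}=\mathbf{e}_i^{(v)}+f_i^{(v)}\mathbf{e}_0^{(v)}$, the image of such a $\mathbf{w}_V$ acquires components along monomials containing $\mathbf{e}_0^{(v)}$, each scaled by $\left(\varepsilon_v^{(1)}\right)^{-1}$ (see (\ref{equ:5.3})). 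Second, and decisively, the quantity your argument actually controls, $\prod_{v\in S}\max_I|c_I|_v\prod_{i\in I}|\delta_v^{(i-1)}|_v^{-1}$, is \emph{not} bounded below uniformly over the admissible parameters: only the combined product of the $|\varepsilon_v^{(1)}|_v$ with \emph{all} the $|\delta_v^{(j)}|_v$ is normalized, so $\prod_{v\in S}\prod_{i\in I}|\delta_v^{(i-1)}|_v^{-1}$ tends to $0$ along, say, the central ray as $|\varepsilon_v^{(1)}|_v\to0$. Hence no single $\rho$ comes out of your case 1, and this is precisely the case that cannot be discarded: the uniform $\rho$ must capture the factor $\prod_v|\varepsilon_v^{(1)}|_v^{-1}$, which is available only through the $\mathbf{e}_0$-components created by $g_{\mathbf{f}^{(v)}}$, to which the nonplanarity bound (\ref{equ:5.4}) must then be applied. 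The paper treats all primitive submodules at once in exactly this way, obtaining (\ref{equ:5.5})--(\ref{equ:cov}) and hence the $\rho$ of (\ref{equ:rho}).

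A related miscue reinforces the same misunderstanding: you invoke the growth condition $\|p_i|_{\mathcal{T}}(\mathbf{t})\|_{\infty}\gg1$ and the $S$-unboundedness of $\mathcal{T}$ as crucial for absorbing unfavourable factors in (II). These play no role in Proposition \ref{main prop} and cannot rescue your case 1 --- making the $\delta_v^{(j)}$'s large only worsens that bound. All that the nondivergence estimate uses is $|\varepsilon_v^{(1)}|_v<1$, $|\delta_v^{(j)}|_v\geq1$ and the normalization defining $\mathfrak{a}^{+}$; the growth and unboundedness hypotheses are needed only in the correspondence and density argument of Section \ref{section:2}, where one passes from the proposition to Theorem \ref{thm:1.4}.
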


Now Theorem \ref{thm:1.4} follows immediately from Proposition \ref{main prop}. We choose our $ 0 < \varepsilon_0 \leq \frac{\rho}{\sqrt{D_K}}$ so that $\tilde{C} \varepsilon_{0}^{\alpha} < 1.$ In view of  the expression of $\tilde{C}$ (\ref{equ: ctil}), it is easy to see that $\varepsilon_0$ depends only on $(n, C_{v_1}, \dots, C_{v_\ell}, \alpha_{v_1}, \dots, \alpha_{v_\ell},D,N_X,K,S).$ Now consider $\mathbf{x} \in U \cap  \mbox{supp}(\mu)$ and choose a ball $B'$ containing $\mathbf{x}$ such that $B' \subseteq \tilde{B'} :=3^{n+1}B' \subseteq U.$ Next choose any ball $B \subseteq B'$ with centre in $\mbox{supp}(\mu)$ and consider $\tilde{B}:=3^{n+1}B.$ Since by the hypothesis of Theorem \ref{thm:1.4}, for each $v \in S,$ $\left(\mathbf{f}^{(v)},\mu_v\right)$ is  nonplanar and $(C_v,\alpha_v)$-good on $U_v$ with respect to $\mu_v,$ conditions (i) and (ii) of Proposition \ref{main prop} holds immediately. Now we set $c=\tilde{C}\varepsilon_{0}^{\alpha} <1,$ and observe that    \eqref{equ:1.9} holds whenever $0 < \varepsilon < \varepsilon_0,$ in view of Proposition \ref{main prop}. This completes the proof of Theorem \ref{thm:1.4}, assuming Proposition \ref{main prop}.

Now we need to prove Proposition \ref{main prop}. We will do it using the so called  `Quantitative nondivergence' technique and our next section is all about that.

\section{Quantitative nondivergence and the proof of Proposition \ref{main prop}} \label{section:5}

We first recall the $S$-arithmetic quantitative nondivergence theorem from \cite{KT-pre}.
\subsection{Quantitative nondivergence} 
We denote the set of all primitive submodules of $\mathcal O_S^m$ by $\frk P(\mathcal O_S,m).$ We recall the following theorem from  \S 6.3 of \cite{KT-pre}.

\begin{theorem}\label{thm:qn} Let $X$ be a Besicovitch metric space, $\mu$ be a $D$-Federer measure on $X.$ For $m \in \mathbb N,$ let a ball $B \subseteq X$ and a continuous map $h : \tilde B \longrightarrow \GL(m,K_S)$ be given, where $\tilde B := 3^m B.$ Now suppose that for some $C, \alpha >0$ and $0 < \rho <1 $ one has
\begin{enumerate}[(C1)]
    \item \label{item:C1} for every $\Delta \in \frk P(\mathcal O_S,m),$ the function $\text{cov}\left(h\left(\cdot\right)\Delta\right)$ is $(C,\alpha)$-good on $\tilde B$ with respect to $\mu;$
    \item \label{item:C2} for every $\Delta \in \frk P(\mathcal O_S,m),$ $\|\text{cov}\left(h\left(\cdot\right)\right)\Delta\|_{\mu,B} \geq \rho.$
\end{enumerate}
Then for any positive $\varepsilon \leq \rho / \sqrt{D_K}$ one has 
\begin{equation}\label{equ:qn}
\displaystyle  \mu\left(\left\{x\in B\,:\, \delta(h(x)\mathcal O_S^m)\, <\, \varepsilon \,\, \right\}\right)\leq mC\left(N_X D^2\right)^m \left(\frac{\varepsilon \sqrt{D_K}}{\rho}\right)^{\alpha} \mu(B)\,.
\end{equation}
\end{theorem}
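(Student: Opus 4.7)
The plan is to prove Theorem \ref{thm:qn} by adapting the Kleinbock--Margulis inductive scheme to the $S$-arithmetic setting, with the induction running over the rank of primitive $\mathcal O_S$-submodules of $\mathcal O_S^m$. First I would reduce the statement about shortest content in $h(x)\mathcal O_S^m$ to a statement about covolumes of primitive submodules. The key input is the adelic Minkowski-type inequality underlying Theorem \ref{thm:1.1}: if every nonzero primitive submodule $\Delta\in\frk P(\mathcal O_S,m)$ satisfies $\text{cov}(h(x)\Delta)\ge\rho$, then specializing to rank-$1$ submodules and using $\text{cov}(\mathcal O_S^m)=(\sqrt{D_K})^m$ forces $\delta(h(x)\mathcal O_S^m)\ge \rho/\sqrt{D_K}$. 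Consequently, it suffices to bound the $\mu$-measure of the set
\begin{equation*}
E_\varepsilon=\bigl\{x\in B:\exists\,\Delta\in\frk P(\mathcal O_S,m)\text{ with }\text{cov}(h(x)\Delta)<\varepsilon\sqrt{D_K}\bigr\}.
\end{equation*}

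Next I would run the induction. The $(C,\alpha)$-good hypothesis (C\ref{item:C1}) applied to the single function $\text{cov}(h(\cdot)\Delta)$ on any subball $B'\subseteq\tilde B$ centered in $\supp(\mu)$ gives the sublevel-set bound
\begin{equation*}
\mu\bigl(\{x\in B':\text{cov}(h(x)\Delta)<s\}\bigr)\;\le\;C\bigl(s/\|\text{cov}(h(\cdot)\Delta)\|_{\mu,B'}\bigr)^{\alpha}\mu(B'),
\end{equation*}
whose usefulness depends on controlling the denominator from below. The hypothesis (C\ref{item:C2}) guarantees this at the outer ball $B$ for every $\Delta$. To exploit this, I would introduce, for each rank $j\in\{1,\dots,m\}$, the family of submodules $\Delta$ which are locally \emph{critical} on a subball $B'$ (i.e.\ the covolume function $\text{cov}(h(\cdot)\Delta)$ drops below the threshold somewhere on $B'$ while all lower-rank primitive submodules keep their covolume above the inductive threshold). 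Covering $E_\varepsilon$ by balls on which a fixed critical $\Delta$ exists, the Besicovitch property (with constant $N_X$) ensures multiplicity at most $N_X$; the Federer property allows one to inflate each such ball by a factor $3$ while increasing the $\mu$-measure by at most $D$, and this inflation is needed twice per inductive level (once to ensure the good-function hypothesis applies, once to pass between the ball where $\Delta$ is critical and a larger ball enjoying the lower bound from (C\ref{item:C2})), so each level of the induction costs a factor $N_X D^2$.

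Iterating through the $m$ possible ranks telescopes these factors to $(N_X D^2)^m$, and a union bound over the rank at which the witnessing $\Delta$ appears produces the leading $m$; the hypothesis $\varepsilon\le\rho/\sqrt{D_K}$ ensures the threshold $s=\varepsilon\sqrt{D_K}$ does not exceed $\rho$, so that every application of the sublevel-set estimate is nontrivial and assembles to the advertised bound $mC(N_XD^2)^m(\varepsilon\sqrt{D_K}/\rho)^{\alpha}\mu(B)$. The main obstacle I expect is the careful bookkeeping of the inductive step: at each rank one must exhibit a locally well-defined critical submodule and verify that the normalization $\|\text{cov}(h(\cdot)\Delta)\|_{\mu,B'}$ in the good-function estimate is still bounded below by $\rho$ (not merely by a shrinking multiple of $\rho$) after the rescaling implicit in passing to a smaller ball $B'\subseteq B$; this is delicate because the critical $\Delta$ itself depends on the ball, and the \emph{finiteness} of the set of candidate critical $\Delta$'s on each ball (so that one can pass to the infimum) is what ultimately forces the use of the $S$-adic lattice structure of $\mathcal O_S^m$ inside $K_S^m$. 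Tracking the $\sqrt{D_K}$ from the adelic Minkowski step through this telescoping is the essential $S$-arithmetic ingredient that distinguishes the argument from the classical $\mathbb Z^m$-case.
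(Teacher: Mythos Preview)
The paper does not give its own proof of Theorem~\ref{thm:qn}: immediately after the statement it simply refers the reader to \cite[\S6, Theorem~6.3]{KT-pre}. So there is nothing in the paper to compare your argument against beyond the citation.

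That said, your outline is an accurate summary of how the result is actually established in \cite{KT-pre}. The reduction of the small-$\delta$ set to the set $E_\varepsilon$ of points where some primitive submodule has small covolume, the induction on rank using ``marked'' (or, in your terminology, locally critical) submodules, the Besicovitch covering combined with two applications of the Federer doubling bound at each inductive level, and the telescoping of the factors into $(N_XD^2)^m$ are all the genuine ingredients of the Kleinbock--Tomanov argument. Two small points where your sketch would need tightening in an actual write-up: first, primitive rank-$1$ $\mathcal O_S$-submodules need not be principal when $\mathcal O_S$ is not a PID, so the passage from $\delta(h(x)\mathcal O_S^m)<\varepsilon$ to the existence of a primitive $\Delta$ with $\text{cov}(h(x)\Delta)<\varepsilon\sqrt{D_K}$ really uses the precise relation between content and covolume of rank-$1$ submodules in the $S$-adic setting (this is where the constant $a_K$ and the normalization $\text{cov}(\mathcal O_S)=\sqrt{D_K}$ enter), not just ``specializing to rank-$1$''; second, the finiteness of candidate critical submodules on a fixed ball is established via the discreteness of $\mathcal O_S^m$ in $K_S^m$ together with a compactness argument for the exterior powers, which is slightly more than just ``the $S$-adic lattice structure''. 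Neither point is a gap in strategy, only in the level of detail appropriate to a sketch.
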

For more details and the proof of Theorem \ref{thm:qn}, 
see (\cite{KT-pre}, \S 6, Theorem 6.3).

\subsection{The proof of Proposition \ref{main prop}}
From the definition of $\Omega_{\varepsilon}$, it follows  that 
\[ \left(g_{\varepsilon_v,\delta_v}\right)_{v} \overline{\tau}\left( \mathbf{f}(\mathbf{x})\right) \notin \Omega_{\left(n+1\right)^{|S_r|/2 + |S_c|} \displaystyle \varepsilon} \Longleftrightarrow 
 \delta \left(\left(g_{\varepsilon_v,\delta_v}\right)_{v} \left(g_{\mathbf{f}^{(v)}(x_v)}\right)_{v} \mathcal O_S^{n+1} \right) < \left(n+1\right)^{|S_r|/2 + |S_c|} \displaystyle \varepsilon  \,.
\]

\noindent We will use Theorem \ref{thm:qn} in the setting
\[\begin{array}{rcl}
  X=\prod_{v \in S} X_v=\prod_{v \in S}K_v^{l_v},m=n+1;\\ \mu, B, C_v, \alpha_v \,\,\mbox{and}\,\,D\,\,\mbox{as in Proposition \ref{main prop}};\\
  h(\mathbf{x})= \left(g_{\varepsilon_v,\delta_v}\right)_{v} \left(g_{\mathbf{f}^{(v)}(x_v)}\right)_{v} \,\,\forall \,\, \mathbf{x}=\left(x_v\right)_v\in \tilde{B}\,,\,\end{array}
 \] where   $$g_{\varepsilon_v,\delta_v}:=\text{diag}\left(\left(\varepsilon_{v}^{(1)}\right)^{-1},\left(\delta_{v}^{(1)}\right)^{-1}, \left(\delta_{v}^{(2)}\right)^{-1} ,\dots,\left(\delta_{v}^{(n)}\right)^{-1}\right) \in \GL(n+1,K_v),$$
 with  $\varepsilon_{v}^{(i)}$'s and $\delta_{v}^{(j)}$'s satisfying the aforementioned conditions and 
 
$$g_{\mathbf{f}^{(v)}(x_v)}:= \begin{bmatrix}
                 1 & \mathbf{f}^{(v)}(x_v) \\0 & I_n
\end{bmatrix} \in \GL(n+1,K_v).$$

Now to get our desired result equation  (\ref{eqn:prop}), we just need to verify the two conditions (C\ref{item:C1}) and (C\ref{item:C2})  of Theorem \ref{thm:qn}. Observe that to check (C\ref{item:C1}) and (C\ref{item:C2}), we first need  explicit expressions for functions $\mathbf{x} \mapsto \text{cov} \left(h(\mathbf{x}) \Delta \right)$ in our setting. 
 
    We recall that given any $\Delta \in \frk P(\mathcal O_S,n+1)$ of rank $j$ there exists an element $\mathbf{w} \in \bigwedge^j (\mathcal{O}_S^{n+1})$ such that 
 $$\text{cov}(\Delta)=a_{K} c(\mathbf{w}),$$
 where $a_{K} \in \mathbb R$ is a constant depending on $K.$ Therefore, we have $\text{cov}(h(\textbf{x})\Delta)=a_{K}c(h(\mathbf{x})\mathbf{w}).$
 
  Before proceeding further let us fix some notations. We take $\mathbf{e}_0,\mathbf{e}_1, \dots, \mathbf{e}_n \in K_S^{n+1}$ as the standard basis of $K_S^{n+1}$ over $K_S,$ where
  $$\mathbf{e}_i=\left( \mathbf{e}_{i}^{(v)}\right)_v, \,\,  \,\, \mathbf{e}_{i}^{(v)} \in K_v^{n+1} \,\, \mbox{for each} \,\, i=0,\dots,n ;$$
and $\left\{ \mathbf{e}_{i}^{(v)}\right\}_{i=0}^{n}$ forms the standard basis of $K_v^{n+1}$ over $K_v$ for each $v\in S.$ Now let $I=\{i_1,i_2,\dots,i_j\} \subset \{0,1, \dots,n \}$ with $i_1 < i_2 < \dots < i_j $ and $\mathbf{e}_I:= \mathbf{e}_{i_1} \wedge \mathbf{e}_{i_2} \dots \wedge \mathbf{e}_{i_j} \in \bigwedge^j(K_S^{n+1}).$ Then the standard basis of $\bigwedge^j(K_S^{n+1})$ is given by 

$$ \left \{ \mathbf{e}_I : I=\{i_1,i_2,\dots,i_j\} \subset \{0,1, \dots,n \} \,\, \mbox{with} \,\, i_1 < i_2 < \dots < i_j \right \},$$ 

\noindent and for any element $\mathbf{b}= \displaystyle\sum_{\tiny \begin{array}{rcl}I\subseteq \{0,1,...,n\},\\ \# I=j\end{array}} b_I \mathbf{e}_I  ,$ we define $\|\mathbf{b}\| = \displaystyle \max_{I} \|b_I\|.$

So we can write the earlier mentioned $\mathbf{w} \in \bigwedge^j (\mathcal{O}_S^{n+1})$ as $\mathbf{w}= \sum_{I} w_I \mathbf{e}_I,$ where $w_I \in \mathcal{O}_S.$ Now we will find the expession of $ \left(g_{\varepsilon_v,\delta_v}\right)_{v} \left(g_{\mathbf{f}^{(v)}(x_v)}\right)_{v} \mathbf{w}$ componentwise. Note that $g_{\mathbf{f}^{(v)}(x_v)}$ leaves $\mathbf{e}_{0}^{(v)}$
 invariant and sends $\mathbf{e}_{i}^{(v)}$ to $f_{i}^{(v)}(x_v) \mathbf{e}_{0}^{(v)} + \mathbf{e}_{i}^{(v)}$ for all $i=1,\dots,n.$ Therefore, we have 
\begin{equation} \label{eqn:5.1}  \left(g_{\mathbf{f}^{(v)}(x_v)}\right) \left(\mathbf{e}_{I}^{(v)} \right) = \left \{ \begin{array}{lcl}\displaystyle  \mathbf{e}_{I}^{(v)} & \mbox{if} \,\, 0\in I \\
                         \displaystyle  \mathbf{e}_{I}^{(v)} + \sum_{i\in I} \pm f_{i}^{(v)}(x_v) \mathbf{e}_{I \cup \{0\} \setminus \{i\}}^{(v)}             &\mbox{otherwise} .
                      
                        \end{array} \right.\end{equation} 
and 

\begin{equation} \label{eqn:5.2}  \left(g_{\varepsilon_v,\delta_v} \right) \left(g_{\mathbf{f}^{(v)}(x_v)}\right) \left(\mathbf{e}_{I}^{(v)} \right) = \left \{ \begin{array}{lcl}\displaystyle {\varepsilon_{v}^{(1)}}^{-1} \left(\prod_{i \in I \setminus \{0\}} {\delta_{v}^{(i)}}^{-1} \right) \mathbf{e}_{I}^{(v)} & \mbox{if} \,\, 0\in I \\
                         \displaystyle  \left(\prod_{i \in I} {\delta_{v}^{(i)}}^{-1} \right) \mathbf{e}_{I}^{(v)} \pm {\varepsilon_{v}^{(1)}}^{-1} \sum_{i\in I} \left(\prod_{r \in I \setminus \{i\}} {\delta_{v}^{(r)}}^{-1} \right) f_{i}^{(v)}(x_v) \mathbf{e}_{I \cup \{0\} \setminus \{i\}}^{(v)}             &\mbox{otherwise} .
                         \end{array} \right.\end{equation} 

\noindent Thus $\left(g_{\varepsilon_v,\delta_v}\right)_{v} \left(g_{\mathbf{f}^{(v)}(x_v)}\right)_{v} \mathbf{w}$ is given by 
 \begin{equation}\label{equ:5.3}
    \begin{array}{rcl}
      \left(\left(g_{\varepsilon_v,\delta_v}\right)_{v} \left(g_{\mathbf{f}^{(v)}(x_v)}\right)_{v} \mathbf{w}\right)^{(v)}= \displaystyle \sum_{0 \notin I}  \left(\prod_{i \in I} {\delta_{v}^{(i)}}^{-1} \right) w_{I} \mathbf{e}_{I}^{(v)} +\\ {\varepsilon_{v}^{(1)}}^{-1} \sum_{0 \in I } \left( \left(\prod_{i \in I \setminus \{0\}} {\delta_{v}^{(i)}}^{-1} \right) w_{I} +    \displaystyle \sum_{i\notin I} \pm \left(\prod_{r \in I \cup \{i\} \setminus \{0\}} {\delta_{v}^{(r)}}^{-1} \right) f_{i}^{(v)}(x_v) w_{I \cup \{i\} \setminus \{0\}}   \right) \mathbf{e}_{I}^{(v)}.
         
    \end{array}
\end{equation}
 Now we will check the conditions (C\ref{item:C1}) and (C\ref{item:C2}).   

$\bullet$ \textbf{Checking (C\ref{item:C1}):} Due to (\ref{equ:5.3}), we can have the explicit  expression for $\text{cov} (h(\mathbf{x}) \Delta)$ and that is given by 
$$\text{cov} \left( h(\mathbf{x}) \Delta \right)= a_{K} c \left( \left(g_{\varepsilon_v,\delta_v}\right)_{v} \left(g_{\mathbf{f}^{(v)}(x_v)}\right)_{v} \mathbf{w} \right) = a_K \prod_v \left \|\left(\left(g_{\varepsilon_v,\delta_v}\right)_{v} \left(g_{\mathbf{f}^{(v)}(x_v)}\right)_{v} \mathbf{w}\right)^{(v)} \right \|_{v,2}.$$
In view of (\ref{equ:5.3}), it is easy to observe that the components of all the coordinates of $$\left(\left(g_{\varepsilon_v,\delta_v}\right)_{v} \left(g_{\mathbf{f}^{(v)}(x_v)}\right)_{v} \mathbf{w}\right)^{(v)}$$ are linear combinations of $1,f_{1}^{(v)}, \dots, f_{n}^{(v)}.$ Then the condition (C\ref{item:C1}) immediately follows from hypothesis $(i)$ of Proposition \ref{main prop}, Lemma \ref{lem:C,alpha}(b,c)  and Lemma \ref{lem:pr}. So there exists $C, \alpha >0$ such that $ \mathbf{x} \mapsto \text{cov} (h(\mathbf{x}) \Delta) $  is $(C,\alpha)$-good on $\tilde{B}$ with respect to $\mu,$ where $C$ and $\alpha$ explicitly computed in terms of $ C_{v_1}, \dots, C_{v_{\ell}}, \alpha_{v_1}, \dots, \alpha_{v_{\ell}}.$

$\bullet$ \textbf{Checking (C\ref{item:C2}):} Now by the compactness of the unit sphere in $K_v^{n+1}$ and hypothesis $(ii)$ of Proposition (\ref{main prop}), there exists $\rho_v >0$ such that for any $\mathbf{a}=(a_0,a_1,\dots,a_n) \in K_v^{n+1}$ we have 
\begin{equation}\label{equ:5.4}
\displaystyle \sup_{\tiny x_v \in B_v\,\cap \,\mbox{supp}\,(\mu_v)} |a_0 + a_1 f_{1}^{(v)}(x_v) + \dots +a_n f_{n}^{(v)}(x_v) |_v \geq \rho_v \|\mathbf{a}\|_{v,2},
\end{equation}
where $B=\prod_{v\in S}B_v.$

\noindent Now using (\ref{equ:5.3}) and (\ref{equ:5.4}), we get that 
 \begin{equation}\label{equ:5.5}
     \displaystyle \sup_{\tiny x_v \in B_v\,\cap \,\mbox{supp}\,(\mu_v)} \left \|\left(\left(g_{\varepsilon_v,\delta_v}\right)_{v} \left(g_{\mathbf{f}^{(v)}(x_v)}\right)_{v} \mathbf{w}\right)^{(v)} \right \|_{v,2} \geq \rho_v |\varepsilon_v^{(1)}|_{v}^{-1} \left(\prod_{i=1}^{n} |\delta_{v}^{(i)}|_{v}^{-1} \right) \max_I |w_I|_{v} ,
 \end{equation}
since  $|\delta_{v}^{(i)}|_{v} \geq 1$ for all $i=1,\dots,n$ and each $v \in S.$ Therefore, we have

\begin{align}\label{equ:5.6}
    \displaystyle \sup_{\tiny x \in B\,\cap \,\mbox{supp}\,(\mu)} c \left(\left(g_{\varepsilon_v,\delta_v}\right)_{v} \left(g_{\mathbf{f}^{(v)}(x_v)}\right)_{v} \mathbf{w}\right) & \geq \displaystyle \prod_{v\in S} \left( \rho_v |\varepsilon_v^{(1)}|_{v}^{-1} \left(\prod_{i=1}^{n} |\delta_{v}^{(i)}|_{v}^{-1} \right) \max_I |w_I|_{v} \right) \nonumber \\ &\displaystyle \geq  \left( \prod_{v \in S} |\varepsilon_{v}^{(1)}|_{v} |\delta_{v}^{(1)}|_{v}   \dots |\delta_{v}^{(n)}|_{v} \right)^{-1} \max_I c(w_I) \left(\prod_{v\in S} \rho_v \right),
\end{align}

\noindent and hence
\begin{equation}\label{equ:cov}
  \|\mbox{cov}\left(h\left(\mathbf{x}\right)\right)\Delta\|_{\mu,B}    \displaystyle \geq  \left(\sqrt{D_K}\right)^j \left( \prod_{v \in S} |\varepsilon_{v}^{(1)}|_{v} |\delta_{v}^{(1)}|_{v} \dots |\delta_{v}^{(n)}|_{v} \right)^{-1} \left(\prod_{v\in S} \rho_v \right),
\end{equation}
since $a_{K} \geq \left(\sqrt{D_K}\right)^j $ and $w_I \in \mathcal O_S$ for each $I.$

\noindent Let   
\begin{align}\label{equ:rho-til}
    \displaystyle \tilde{\rho}& \defeq  \left(\sqrt{D_K}\right)^j \left( \prod_{v \in S} |\varepsilon_{v}^{(1)}|_{v} |\delta_{v}^{(1)}|_{v} \dots |\delta_{v}^{(n)}|_{v} \right)^{-1}  \left(\prod_{v\in S} \rho_v \right) \nonumber \\ & \displaystyle = \left(\sqrt{D_K}\right)^j  \left(\text{const}_K\right)^{-(n+1)} \left(\prod_{v\in S} \rho_v \right).
\end{align}

So condition (C\ref{item:C2}) holds  with 
\begin{equation}\label{equ:rho}
    \rho = \min \{1,\tilde{\rho} \}.
\end{equation}

Now as a consequence of Theorem (\ref{thm:qn}), the Proposition (\ref{main prop}) follows immediately with  
\begin{equation}\label{equ: ctil}
    \tilde{C}= C \left(N_X D^2\right)^{n+1} \left(n+1\right)^{1+\frac{\alpha |S_r|}{2}+\alpha |S_c| } \left(\frac{\sqrt{D_K}}{\rho}\right)^{\alpha} .
\end{equation}\,\,$\Box$

\end{document}